\definecolor{lightgray}{rgb}{0.9,0.9,0.9}
\def\co{\colon\thinspace}
\newcommand\GG{\mathbb{G}}
\newcommand\PP{{\mathbb{P}}}
\newcommand\LL{{\mathcal{L}}}
\newcommand\MM{{\mathcal{M}}}
\newcommand\WW{{\mathcal{W}}}
\newcommand\RR{\mathbb{R}}
\newcommand\ZZ{\mathbb{Z}}
\def\coeq{\colonequals}
\DeclareMathOperator*{\colim}{colim}
\DeclareMathOperator{\Aut}{Aut}
\DeclareMathOperator{\cft}{cft}
\DeclareMathOperator{\Der}{Der}
\DeclareMathOperator{\Sing}{Sing}
\DeclareMathOperator{\dR}{dR}
\DeclareMathOperator{\Ho}{\mathbf{Ho}}
\DeclareMathOperator{\Lie}{Lie}
\DeclareMathOperator{\DGCA}{\mathbf{DGCA}}
\DeclareMathOperator{\DGCC}{\mathbf{DGCC}}
\DeclareMathOperator{\DGLA}{\mathbf{DGLA}}
\DeclareMathOperator{\fft}{fft}
\DeclareMathOperator{\Nil}{\mathbf{Nil}}
\DeclareMathOperator{\sSet}{\mathbf{sSet}}
\DeclareMathOperator{\SU}{SU}
\DeclareMathOperator{\Top}{\mathbf{Top}}
\DeclareMathOperator{\op}{op}
\DeclareMathOperator{\PL}{PL}
\DeclareMathOperator{\qis}{q-is}
\DeclareMathOperator{\Map}{Map}
\DeclareMathOperator{\MC}{MC}
\DeclareMathOperator{\Hom}{Hom}
\DeclareMathOperator{\SR}{SR}
\newcommand{\BB}{\mathbb{B}}
\newcommand{\CC}{\mathbb{C}}
\newcommand{\QQ}{\mathbb{Q}}
\newcommand{\abs}[1]{\lvert #1 \rvert}
\newcommand{\dslash}{/\!\!/}
\newcommand{\h}{\mathfrak{h}}
\newcommand{\Li}{L_\infty}
\newcommand{\LiAlg}{\mathbf{\Li}\text{-}\mathbf{Alg}}
\newcommand{\cO}{\mathcal{O}}
\newcommand{\Set}{\mathbf{Set}}
\newcommand{\mc}[1]{\mathcal{#1}}
\newtheorem{thm}{Theorem}
\newtheorem{crl}[thm]{Corollary}
\theoremstyle{definition}
\newtheorem{example}{Example}[section]
\theoremstyle{remark}
\newtheorem{remark}{Remark}[section]
\author[A. A. Voronov]{Alexander A. Voronov}
\address{School of Mathematics,
  University of Minnesota,
  Minneapolis, MN 55455 and
 Kavli IPMU (WPI), UTIAS, The University of Tokyo, Kashiwa, Chiba 277-8583, Japan
 }
\thanks{Research is supported in part by World Premier International Research Center Initiative (WPI Initiative), MEXT, Japan, 
and Travel grant \#585720 from the Simons Foundation. }
\title
{Rational Homotopy Theory}
\begin{document}
\begin{abstract}
This is a survey of Rational Homotopy Theory, intended for a Mathematical Physics readership.
\end{abstract}

\maketitle

\tableofcontents

\section*{Introduction}
Rational Homotopy Theory (RHT) is a story of success in Algebraic
Topology. It sacrifices the all-time favorite homotopy equivalence
relation on topological spaces for the weaker rational homotopy
equivalence, ignoring all the torsion in homotopy and homology groups,
and provides a complete algebraic invariant of the rational
equivalence class of a good enough space $X$, the isomorphism class of
a differential graded (dg-) commutative algebra (DGCA) $M(X)$, called
the Sullivan minimal model of $X$: two such spaces are rationally
equivalent if and only if their Sullivan minimal models are
isomorphic. Moreover, with suitable model-category structures, the
homotopy category of good enough DGCAs becomes equivalent to the
rational homotopy category of good enough spaces. Another feature of
the DGCA $M(X)$ is that it is rather easily computable, making the
invariant $M(X)$ an effective tool of topology. A parallel,
historically earlier algebraic model of a space $X$ is given by a
differential graded Lie algebra (DGLA) $Q(X)$, called the Quillen
minimal model, and provides practically the same features. Koszul
duality between the commutative and Lie operads, \cite{GK}, brings the
two models together.

Perhaps, one of the most spectacular mathematical applications of RHT
is the result of D.~Sullivan and M.~Vigu\'{e}-Poirrier
\cite{Sullivan.Vigue-Poirrier.1976, Felix.Oprea.Tanre} which states that a
sufficiently general simply connected Riemannian manifold has
infinitely many closed geodesics. RHT has made a strong appearance in
Mathematical Physics via H.~Sati's \emph{Hypothesis H}, which states,
roughly speaking, that the Sullivan minimal model of the 4-sphere
$S^4$ captures the dynamics of fields in M-theory in the low-energy
(UV) limit, also known as 11d supergravity
\cite{Sati.2013}. Hypothesis H progresses to the dimensional
reductions of M-theory with its dynamics described by the iterated
cyclic loop space $\mathcal{L}_c^k S^4$ of the 4-sphere \cite{FSS17,
  FSS19b, FSS-WZW, GS21, SV1, SV2}.

In this survey, we review the main ideas and results of RHT. RHT
cannot complain about the lack of surveys, see
\cite{Hess,Berglund.RHT,Menichi,Moerdijk,Wang,Holstein}. What
distinguishes this survey from the variety of previous ones is that
it, first of all, keeps an eye on Mathematical Physics. A novel
feature of this survey is that more recent developments which engage
$\Li$-algebras in lieu of DGLAs that are free as graded Lie algebras
are put forward as Quillen minimal models. Another novelty, which may
get a rise out of seasoned rational homotopy theorists, is the use of
the graded symmetric algebra notation $S(V)$ in lieu of $\wedge V$, as
well as swapping the standard abbreviation CDGA for DGCA. The reason
for the first deviation from the standard convention is that in the
category of graded vector spaces, the graded exterior algebra is not
free as a graded commutative algebra and not isomorphic to the graded
symmetric algebra. The reason for the second deviation from the
standards is that the placement of modifiers may be commutative but
not necessarily associative: D(GC)A $\ne$ (DG)CA $=$ C(DG)A. Whatever
abbreviation the practitioners use, they always mean D(GC)A in this
context.

Some novelties in exposition do not mean novelties in the content:
this article presents only known results. The only possible exception,
which may appear for the first time in written form, is \Cref{CS} on
the Chern-Simons interpretation of the ``phantom'' 3-form $g_3$ on the
sphere $S^2$. And even that is not due to the author of the survey.

\textbf{Disclaimer}: This survey, being limited in length, leaves
aside a number of remarkable developments of RHT, such as, for
example, those related to mapping spaces \cite{Sch.St.2012,
  Buijs.Murillo.2013, Berglund.rhtoms.2015, Wierstra.2019, BFMT.2020,
  RNV.2020} or disconnected spaces
\cite{Lazarev.Markl.2015}. Likewise, we do not pretend to cover all
the applications of RHT to physics. The choice of topics obviously
reflects the bias of the author.

\textbf{Conventions}: When we use the term ``space,'' we will mean a
topological space. Moreover, we will assume all spaces to be
\emph{path-connected} unless stated otherwise. All manifolds will be
smooth ($C^\infty$) by default. In graded algebra, $\abs{a}$ will
denote the degree of an element $a$.

\smallskip
{\bf Acknowledgments}: I am grateful to my collaborator Hisham Sati,
who introduced me to the beautiful interaction of rational homotopy
with physics, Alexey Bondal and Mikhail Kapranov, with whom I had
numerous fruitful discussions at Kavli IPMU, and Eli Schlossberg, who
shared his notes on rational homotopy and deformation theories with
me.

\section{Rational homotopy category of spaces}
\label{RHC}

A continuous map $f: X \to Y$ between topological spaces is called a
\emph{rational $($homotopy$)$ equivalence} if it induces an
isomorphism on rational homology: $f_*: H_\bullet (X; \QQ) \to
H_\bullet (Y; \QQ)$. (Compare this to a stronger notion of weak
equivalence: to compare apples to apples and oranges to oranges, let
us say a continuous map $f: X \to Y$ of simply connected spaces is
called a \emph{weak $($homotopy$)$ equivalence} if it induces an
isomorphism $f_*: H_\bullet (X; \ZZ) \to H_\bullet (Y; \ZZ)$ on
integral homology.) We say that two spaces $X$ and $Y$ are
\emph{rationally equivalent} if they may be connected by a sequence of
rational equivalences $X \leftarrow X_1 \to \dots \leftarrow X_n
\rightarrow Y$. The equivalence class of a space $X$ up to rational
equivalence is called the \emph{rational homotopy type of} $X$. If we
add formal inverses of rational equivalences to the category of
spaces, i.e., we localize the category with respect to rational
equivalences, we obtain the \emph{rational homotopy category of
spaces}.

\begin{example}
The unique map $\RR \PP^2 \to \{*\}$ from the real projective plane to
a singleton is a rational equivalence, because the homology groups
$H_n(\RR \PP^2; \ZZ)$, $n > 0$, are all torsion.
\end{example}

\begin{example}
Let $S^{2n-1}$, $n \ge 1$, be an odd-dimensional sphere and $K(\ZZ,
2n-1)$ be the Eilenberg-MacLane space. Let $S^{2n-1} \to K(\ZZ, 2n-1)$
be a map realizing a generator of $\pi_{2n-1} (K(\ZZ, 2n-1)) =
\ZZ$. This map is a rational equivalence, because it induces an
isomorphism on rational homotopy groups:
\[
\pi_{i} (S^{2n-1}) \otimes \QQ \cong \pi_{i} (K(\ZZ, 2n-1))
\otimes \QQ =
\begin{cases}
  \QQ, & \text{if $i = 2n-1$,}\\
  0, & \text{otherwise},
  \end{cases}
\]
cf.\ the remark at the end of the second paragraph of
\Cref{model-cat}. Thus, $S^{2n-1}$ is a \emph{rational Eilenberg-MacLane
space}. The same argument does not work for even-dimensional spheres
$S^{2n}$, because $S^{2n}$ has an extra rational homotopy group in
degree $4n-1$:
\[
\pi_{i} (S^{2n}) \otimes \QQ \cong 
\begin{cases}
  \QQ, & \text{if $i = 2n$ or $4n-1$,}\\
  0, & \text{otherwise}.
  \end{cases}
\]
\end{example}

\section{Differential graded commutative algebras}

RHT assigns an algebraic model, namely a DGCA, see below, to a
space. Such an algebra is supposed to be large enough to carry all the
information about the rational homotopy type of the space. The theory
works further to replace this large DGCA with a much smaller model,
called a minimal Sullivan model. The prototypical example of an
excessively large model is the de Rham algebra $\Omega^\bullet(X)$ of
a manifold $X$. In this case, the ground field has to be
extended to $\RR$, and we are dealing with the \emph{real}, rather
than rational, \emph{homotopy type} of $X$.

A \emph{differential graded commutative algebra $($DGCA$)$} over a
field $k$ is a graded associative $k$-algebra $A = \bigoplus_{n \in
  \ZZ} A^n$ with a multiplication
\begin{align*}
  A \otimes A & \to A,\\
  a \otimes b & \mapsto ab,
\end{align*}
which respects the grading: $A^m \otimes A^n \to A^{m+n}$, is
\emph{graded commutative}:
\[
ba = (-1)^{\abs{a}\abs{b}} ab,
\]
has a unit $ 1 \in A$, and has a differential $d: A \to A$, a
$k$-linear operator of degree 1 such that $d^2 = 0$ and
it is a (graded) \emph{derivation} of $A$:
\[
d(ab) = (da)b + (-1)^{\abs{a}} a (db).
\]
The derivation property, also known as the \emph{Leibniz rule},
implies that the differential kills the constants: $d(1) = 0$. A DGCA
\emph{homomorphism} is a linear map $A \to B$ which respects the DGCA
structure: the multiplication, unit, grading, and differential. In
this survey, a DGCA will by default be nonnegatively graded ($A =
\bigoplus_{n \ge 0} A^n$) over a field $k$ of characteristic 0.

A DGCA $A$ is called \emph{$($homologically$)$ connected} if $A^0 = k$
(respectively, $H^0(A) = k$). It is called \emph{$($homologically$)$
simply connected} if $A^0 = k$ and $A^1 = 0$, (respectively, $H^0(A) =
k$ and $H^1(A) = 0$).

A DGCA $(S(V), d)$ is called \emph{semifree}, if it is based on the
free graded commutative algebra $S(V) = \bigoplus_{n \ge 0} S^n(V)$ on
a positively graded vector space $V = \bigoplus_{n>0} V^n$.

\section{Minimal Sullivan models of DGCAs}

RHT provides a minimal Sullivan model for each good enough DGCA. A
\emph{Sullivan algebra} is a semifree DGCA $(S(V), d)$ such that the
differential $d$ satisfies the following nilpotency condition, also
known as the \emph{Sullivan condition}: there exists a filtration of
the generating space $V$ by graded subspaces:
\[
0 = V(0) \subset V(1) \subset \dots
\]
such that $V = \bigcup_{n \ge 0} V(n)$ and $d (V(n)) \subset
S(V(n-1))$ for all $n \ge 1$. A Sullivan algebra is automatically
connected.

A semifree DGCA $(S(V), d)$ is called \emph{minimal} if $d(V) \subset
S^{\ge 2} (V)$.

Every simply connected minimal DGCA $S(V)$ is necessarily Sullivan:
the filtration associated with the grading does the job. Indeed, we
have $V^1 = 0$ and for
\[
V(n) \coeq \bigoplus_{2\le i \le n} V^i, \qquad n \ge 0,
\]
the Sullivan nilpotency condition follows from $d (V^n) \subset S^{\ge
  2}(V)^{n+1} \subset S(V(n-1))$ for all $n \ge 1$.

A DGCA \emph{quasi-isomorphism} is a DGCA homomorphism $A \to B$ which
induces and isomorphism on the cohomology of the differentials
$H^\bullet (A, d_A) \xrightarrow{\sim} H^\bullet (B, d_B)$. In this
case we write $A \xrightarrow{\qis} B$ and say that $A$ (or $A
\xrightarrow{\qis} B$, to be more precise) is a \emph{model} of
$B$. Sullivan and minimal Sullivan models play an important role in
RHT for a variety of reasons, mostly related to the fact that they are
cofibrant DGCAs in a suitable model-category structure, see
\Cref{model-cat}.

\begin{thm}[{\cite[Theorems 14.11-14.12]{RHT}}]
  \label{unique}
  \begin{enumerate}[$(1)$]
  \item
    Every quasi-isomorphism between minimal Sullivan algebras is an
    isomorphism.
  \item
    Every homologically connected DGCA has a minimal Sullivan model,
    unique up to isomorphism.
  \end{enumerate}
\end{thm}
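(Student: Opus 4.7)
The plan is to handle both parts using the indecomposables functor sending an augmented DGCA $(A, d)$ to the chain complex $Q(A) = A^+/(A^+)^2$, where $A^+$ is the augmentation ideal. For a minimal Sullivan algebra $(S(V), d)$, the condition $d(V) \subset S^{\geq 2}(V)$ means that the induced differential on $Q(S(V)) \cong V$ is zero, so any DGCA morphism $\phi \colon (S(V), d) \to (S(W), d')$ between minimal Sullivan algebras linearizes to a plain graded-vector-space map $Q\phi \colon V \to W$.

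For part (1), I would show that if $\phi$ is a quasi-isomorphism then $Q\phi$ is an isomorphism, by induction on the Sullivan filtration $V = \bigcup_n V(n)$. At stage $n$, the condition $d(V(n)) \subset S(V(n-1))$ lets one read off $V(n)/V(n-1)$ from the cohomology of $S(V(n))$ relative to $S(V(n-1))$; combined with a compatible choice of Sullivan filtration on $W$ and a Five Lemma, the inductive hypothesis forces $Q\phi$ to be bijective on each new layer of generators. Once $Q\phi$ is known to be a graded-vector-space isomorphism, a second induction along the Sullivan filtration constructs a two-sided inverse $\psi \colon S(W) \to S(V)$ of $\phi$ directly, lifting $(Q\phi)^{-1}$ generator by generator and using $d(V(n)) \subset S(V(n-1))$ to keep differentials compatible.

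For existence in part (2), I would build a nested sequence of minimal Sullivan subalgebras $k = M_0 \subset M_1 \subset M_2 \subset \cdots$ with compatible DGCA maps $\phi_n \colon M_n \to A$ satisfying that $H^i(\phi_n)$ is an isomorphism for $i < n$ and injective for $i = n$. The step $M_n \to M_{n+1}$ is a two-part Hirsch extension: first adjoin closed generators in degree $n+1$ mapping to representatives of $\operatorname{coker} H^{n+1}(\phi_n)$ to achieve surjectivity there, then adjoin further generators in degree $n+1$ whose differentials eliminate unwanted classes in $\ker H^{n+2}$. Setting $M \coeq \bigcup_n M_n$ and $\phi \coeq \bigcup_n \phi_n$ produces the minimal Sullivan model $\phi \colon M \xrightarrow{\qis} A$; minimality is preserved at each step by construction.

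For uniqueness, given two minimal Sullivan models $\phi \colon M \to A$ and $\phi' \colon M' \to A$, I would first replace $\phi'$ by a surjective quasi-isomorphism via a standard path-object factorization, then use the lifting property enjoyed by Sullivan algebras against surjective quasi-isomorphisms to produce a morphism $\psi \colon M \to M'$ with $\phi' \circ \psi = \phi$. Such a $\psi$ is automatically a quasi-isomorphism by two-out-of-three, and part (1) promotes it to an isomorphism. The main obstacle is setting up this lifting property cleanly: it is proved by induction on the Sullivan filtration of $M$, lifting each generator $v$ to a preimage in $M'$ compatible with the previously constructed lift of $dv$, using that the kernel of a surjective quasi-isomorphism is acyclic. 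Once this inductive lifting is in hand, both the uniqueness argument and the inverse construction in part (1) reduce to bookkeeping.
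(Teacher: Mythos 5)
Your proposal has the right overall architecture (linear parts for part (1), inductive extensions for existence, lifting against surjective quasi-isomorphisms plus part (1) for uniqueness), and in fact the paper offers no proof of \Cref{unique} at all, deferring to \cite[Theorems~14.11--14.12]{RHT}; so the comparison must be with the standard arguments, and there your proposal has two genuine gaps, both sitting exactly in the generality the statement claims: \emph{homologically connected}, not homologically simply connected, DGCAs. The existence construction breaks once $H^1(A) \neq 0$. In your schedule, degree-one generators are adjoined only at the step $M_0 \to M_1$, at which point $\ker H^2(\phi_0) = 0$, so no degree-one generator with nonzero differential is ever introduced; yet the closed degree-one generators you do adjoin create product classes in $H^2(M_1)$ that may die in $A$, and such classes can only be killed by \emph{further} degree-one generators, which your induction never revisits. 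Concretely, run the construction on $A = (S(a,b,c),d)$ with $\abs{a}=\abs{b}=\abs{c}=1$, $da=db=0$, $dc=ab$ (which is its own minimal Sullivan model): your $M_1 = (S(a',b'),0)$ has $[a'b'] \neq 0$ mapping to $[ab]=0$, no later stage can make $a'b'$ exact, and the output fails to be a quasi-isomorphism. More generally, the verification that killing $\ker H^{n+2}$ creates no new degree-$(n+2)$ kernel classes uses that products of a new degree-$(n+1)$ generator with old generators have degree $\geq n+3$, i.e., precisely the absence of degree-one generators. This is why the standard route --- reflected in the paper's arrangement of statements --- first builds a possibly non-minimal Sullivan model and then splits off the contractible tensor factor via the structure theorem, \Cref{product}; a direct minimal construction works only under the stronger hypothesis $H^1(A)=0$.

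In part (1) the crux is the claim that $Q\phi$ is an isomorphism, and your sketch assumes something that does not exist: a ``compatible choice of Sullivan filtration on $W$.'' A morphism need not carry $S(V(n))$ into $S(W(n))$ for any Sullivan filtrations; already cocycles of $S(W)$ need not lie in $S(W(0))$ (in the example above, $ca$ is a cocycle of $S(a,b,c)$ not lying in $S(a,b)$), so even the base case of your five-lemma induction does not start. The known proofs go through homotopy theory: Sullivan algebras lift against quasi-isomorphisms up to homotopy, producing a homotopy inverse $\psi$, and homotopic morphisms into a \emph{minimal} Sullivan algebra have equal linear parts, whence $Q\psi \, Q\phi = \id$ and $Q\phi \, Q\psi = \id$; alternatively one reproduces the delicate filtration analysis of \cite{RHT}. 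That delicacy is unavoidable, because the statement is false without the nilpotence condition: with $\abs{x}=\abs{y}=1$, $dx=0$, $dy=xy$, the inclusion $(S(x),0) \hookrightarrow (S(x,y),d)$ is a quasi-isomorphism of minimal semifree DGCAs that is not an isomorphism --- $S(x,y)$ is minimal but admits no Sullivan filtration. So the step you leave vague is where the whole theorem lives. Your remaining pieces are sound: once $Q\phi$ is known to be an isomorphism, $\phi$ is invertible by the word-length filtration argument, and your uniqueness argument --- surjectivize one model, lift using acyclicity of the kernel of a surjective quasi-isomorphism, conclude by two-out-of-three and part (1) --- is the standard one.
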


We conclude this section with a structure theorem for Sullivan
algebras.

\begin{thm}[{\cite[Theorem 14.9]{RHT}}]
  \label{product}
  Every connected Sullivan algebra is isomorphic to the tensor product
  of a minimal Sullivan algebra and a semifree contractible algebra
  $S(U \oplus dU)$, where $U$ is a positively graded vector space.
\end{thm}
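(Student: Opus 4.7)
The strategy is to incrementally extract contractible pairs $(u, du)$ from the generators of $V$ via a suitable change of basis. I would proceed by induction along the Sullivan filtration $V = \bigcup_{n \ge 0} V(n)$.

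The key tool is the linear part of the differential. Let $p\co S(V) \to V$ be the projection killing $S^{\ge 2}(V)$ and the scalars, and define $d_0 \coeq p \circ d|_V \co V \to V$. Because $V$ is positively graded, $d$ sends $S^{\ge 2}(V)$ into $S^{\ge 2}(V)$ by the Leibniz rule, so $p \circ d$ vanishes on $S^{\ge 2}(V)$; combined with $d^2 = 0$ this forces $d_0^2 = 0$. Minimality of $(S(V), d)$ is equivalent to $d_0 = 0$, so the ``obstruction'' to being minimal is measured by the chain complex $(V, d_0)$.

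At stage $n$ of the induction, assume a decomposition $V(n-1) = W_{n-1} \oplus U_{n-1} \oplus Z_{n-1}$ with $Z_{n-1} = d(U_{n-1}) \subset V(n-2)$ holding literally (no higher-order terms), $d(W_{n-1}) \subset S^{\ge 2}(W_{n-1})$, and $d(Z_{n-1}) = 0$. Under these assumptions, $S(V(n-1))$ splits as a tensor product of DGCAs $S(W_{n-1}) \otimes S(U_{n-1} \oplus Z_{n-1})$, the second factor being the standard contractible DGCA (hence acyclic). For each new generator $v$ in a chosen complement of $V(n-1)$ in $V(n)$, $dv \in S(V(n-1))$ is a cycle; applying K\"unneth with the acyclic tensor factor, one corrects $v$ by an element $\gamma_v \in S^+(V(n-1))$ so that $dv - d\gamma_v \in S(W_{n-1})$. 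A secondary analysis then partitions the corrected new generators --- with further corrections in $S^{\ge 2}$ and possible re-classification of some elements of $W_{n-1}$ into the new $Z_n$ --- into a decomposition $V(n) = W_n \oplus U_n \oplus Z_n$ preserving the structural properties. Taking unions $W = \bigcup W_n$, $U = \bigcup U_n$, $Z = \bigcup Z_n$ yields $V = W \oplus U \oplus Z$ with $Z = dU$ literally and $d(W) \subset S^{\ge 2}(W)$, so the tautological DGCA map $(S(W), d|_W) \otimes S(U \oplus Z) \to (S(V), d)$ is a bijection on generators commuting with $d$, hence an isomorphism, with the first factor minimal Sullivan and the second the sought contractible $S(U \oplus dU)$.

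The main obstacle lies in the secondary analysis at each inductive step: arranging (i) that new generators can be modified so their differentials reduce to single generators from lower filtration with no higher-order remainders, (ii) that the re-classification of certain $w \in W_{n-1}$ into the contractible part $Z_n$ is compatible with the pre-existing minimal structure on $S(W_{n-1})$, and (iii) that all corrections live in strictly lower filtration levels, so the induction does not loop back on itself. This cohomological bookkeeping --- solving the requisite coboundary equations in the correct subspaces using the acyclicity of the already-constructed $S(U_{n-1} \oplus Z_{n-1})$ factor --- is the crux of the argument.
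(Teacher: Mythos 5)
The paper itself offers no proof of this statement; it defers to F\'elix--Halperin--Thomas \cite[Theorem~14.9]{RHT}, so your proposal has to be measured against that standard argument. Your setup is the correct one --- the linear part $d_0 \coeq p \circ d|_V$ with $d_0^2 = 0$, the observation that minimality is equivalent to $d_0 = 0$, and the K\"unneth-based correction of generators --- but the proposal stops exactly where the real proof has to begin, and the step you label ``secondary analysis'' is not bookkeeping: as organized, it fails. Suppose at stage $n$ a corrected generator $v'$ has $dv' = \ell + q$ with $0 \ne \ell \in W_{n-1}$ and $q \in S^{\ge 2}(W_{n-1})$. You must move $v'$ into $U_n$ and $dv'$ into $Z_n$, hence remove $\ell$ from the minimal part. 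But then every generator $w$ whose differential involves $\ell$ has $dw \notin S^{\ge 2}(W_n)$, since rewriting $\ell = dv' - q$ injects the contractible generator $dv'$ into $dw$. Repairing $w$ requires a new correction $\gamma_w$ with $dw - d\gamma_w \in S(W_n)$, but your K\"unneth mechanism for producing such $\gamma_w$ presupposes that $S(W_n)$ is already a tensor factor (a sub-DGCA), which is precisely what is being established --- the argument is circular. Worse, the generators needing repair can lie at lower filtration levels than $v'$, so the induction along the Sullivan filtration loops back on itself; this is your own worry (iii), which you flag but do not resolve. A proposal that names its crux without closing it has a genuine gap.

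The missing idea is the device that lets one avoid maintaining a literal sub-DGCA $S(W_n)$ at every stage. The standard proof splits once, globally, at the linear level: $V = W \oplus U \oplus d_0 U$, with $d_0|_U$ injective and $W$ a complement of $\operatorname{im} d_0$ in $\ker d_0$. Then (i) $S(U \oplus dU)$ is a contractible sub-DGCA, and $W \oplus U \oplus dU$ is again a free generating space, so $S(V) \cong S(W) \otimes S(U \oplus dU)$ \emph{as graded algebras only}; (ii) the minimal factor is produced as a \emph{quotient}, not a subalgebra: $(S(W), \bar d)$ is $S(V)$ modulo the ideal generated by $U \oplus dU$, which is minimal because $d_0(W) = 0$, and the projection $q\co (S(V), d) \to (S(W), \bar d)$ is a surjective quasi-isomorphism; (iii) the lifting property of Sullivan algebras (their cofibrancy) against surjective quasi-isomorphisms yields a DGCA section $\sigma\co (S(W), \bar d) \to (S(V), d)$ with $q \sigma = \id$; and (iv) the multiplication map $(S(W), \bar d) \otimes S(U \oplus dU) \to (S(V), d)$, $w \otimes c \mapsto \sigma(w)\cdot c$, is a DGCA morphism that is bijective, because a morphism of free graded commutative algebras on positively graded spaces whose linear part on generators is an isomorphism is itself an isomorphism. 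All the corrections you attempt to perform generator by generator are packaged, once and for all, in the section $\sigma$; the obstruction-theoretic induction lives inside the proof of the lifting lemma, where it is organized so that it closes, rather than under the literal-splitting hypotheses you impose. Without that lemma (or an equivalent device), your inductive step cannot be completed as stated.
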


\section{Sullivan minimal models of spaces}

Every manifold $X$ has a DGCA associated to it, the de Rham
algebra $(\Omega^\bullet(X), d_{\dR})$. This is quite a familiar a
DGCA over the reals $\RR$ whose cohomology is the de Rham cohomology
$H^\bullet(X; \RR)$. Sullivan \cite{Sullivan} has generalized this
construction to work for an arbitrary topological space $X$ and
produce a DGCA $A_{\PL}(X)$ over the rationals $\QQ$ whose cohomology
is isomorphic to the rational cohomology of $X$:
\[
H^\bullet (A_{\PL} (X)) \cong H^\bullet (X; \QQ).
\]
The DGCA $A_{\PL} (X)$, called the polynomial de Rham algebra or the
algebra of polynomial differential forms on $X$, is defined in
\Cref{ratlmodels}. By definition, a Sullivan minimal model $M(X)$ of a
path-connected space $X$ is a minimal Sullivan model of $A_{\PL}
(X)$. Since $M(X)$ is defined uniquely up to isomorphism, it is
usually called \emph{the} Sullivan minimal model of $X$. One has
\begin{equation}
  \label{cohomology}
H^\bullet (M (X)) \cong H^\bullet (X; \QQ)
\end{equation}
and 
\begin{equation}
  \label{homotopy}
M (X)^{> 0}/ (M (X)^{> 0})^2 \cong \Hom_{\ZZ} (\pi_\bullet (X), \QQ).
\end{equation}
This implies that $M(X) \cong S(\Hom_{\ZZ} (\pi_\bullet (X), \QQ))$.

\subsection{Real algebraic models of manifolds}

If $X$ is a connected manifold, then $\Omega^\bullet(X)$ is
quasi-isomorphic to $A_{\PL} (X) \otimes_\QQ \RR$ and defines the real
homotopy type of $X$, \cite[Section 2.4.1]{Felix.Oprea.Tanre}. The
\emph{Sullivan minimal model $M_\RR(X)$ of $X$ over $\RR$} is then the
minimal Sullivan model of $\Omega^\bullet(X)$. Let us compute the real
homotopy type, i.e., the Sullivan minimal model over the reals, of the
sphere $S^n$ for every $n \ge 1$. The Sullivan minimal model of $S^n$
over $\QQ$ will be exactly the same, except that the ground field will
be changed to $\QQ$.

\begin{example}[The real homotopy type of an odd-dimensional
    sphere] We claim that the polynomial algebra $\RR[g_n] \cong \RR
  \oplus \RR g_n$ in a variable $g_n$ of degree $n$ with differential
  determined by $d g_n = 0$ is a Sullivan minimal model of
  $S^n$. Indeed, choose a differential $n$-form $\omega_n$ on $S^n$
  generating its cohomology $H^n (S^n; \RR)$ and define an algebra
  homomorphism
\begin{align*}
  \RR[g_n] & \to \Omega^\bullet (S^n),\\
  g_n & \mapsto \omega_n,
\end{align*}
  which obviously respects the grading and differentials. Moreover, it
  is clear it is a quasi-iso\-mor\-phism. $\RR[g_n]$ is trivially
  Sullivan and minimal. We conclude that $M_\RR (S^n) = \RR[g_n]$ with
  $\abs{g_n} = n$ and $d = 0$ for $n$ odd.
\end{example}

\begin{example}[The real and rational homotopy type of an even-dimensional
    sphere]
  \label{even}
  If $n \ge 2$ is even, then arguing similarly, we observe that the
  cohomology algebra $(\RR[g_n]/(g_n^2), 0)$, admits a
  quasi-isomorphism to the de Rham algebra $\Omega^\bullet (S^n)$. The
  problem is that $(\RR[g_n]/(g_n^2), 0)$ is not Sullivan, as it is
  not even semifree. If we drop the relation $g_n^2 = 0$ to gain
  freeness, we will acquire cohomology in each degree which is a
  multiple of $n$. To kill the cohomology class $g_n^2$, we introduce
  a new generator $g_{2n-1}$ of degree $2n-1$ with $dg_{2n-1} = g_n^2$
  and get the semifree resolution $\RR[g_n, g_{2n-1}]
  \xrightarrow{\qis} \RR[g_n]/(g_n^2)$ with $d g_n = 0$ and $d
  g_{2n-1} = g^2_n$ of $(\RR[g_n]/(g_n^2), 0)$. This semifree
  resolution is simply connected and minimal and thereby
  Sullivan. Thus, $M_\RR (S^n) = \RR[g_n, g_{2n-1}]$ with $\abs{g_n} =
  n$, $\abs{g_{2n-1}} = 2n-1$, and $d g_n = 0$, $d g_{2n-1} = g_n^2$
  for $n$ odd. Note that we could have set $dg_{2n-1} = c g^2_n$ for
  any nonzero $c \in \RR$. The resulting DGCA would be isomorphic to
  the one with $c=1$. The choice of $c = -1/2$ looks more like the
  Maurer-Cartan equation, $dg_{2n-1} + \tfrac{1}{2} g^2_n = 0$, and is
  particularly common in physics, see \cite{Sati.2013}.
\end{example}

Note that in both examples, the cohomology algebra is quasi-isomorphic
to the de Rham algebra. In such cases, we say that the manifold and
its de Rham algebra are \emph{formal}. For a general DGCA $A$ or space
$X$, we say it is \emph{formal}, if $A$ or, respectively,
$A_{\PL}(X)$, is quasi-isomorphic to its cohomology DGCA (with the
zero differential). Another famous class of formal spaces are compact
K\"{a}hler manifolds, as per the Deligne-Griffiths-Morgan-Sullivan
theorem \cite{DGMS}.

\begin{example}[Realization of $g_3$ for $S^2$ as a Chern-Simons form]
  \label{CS}
For an even number $n \ge 2$, one may wonder where the element
$g_{2n-1}$ for the sphere $S^n$ lives. It is certainly not there in
the cohomology or even the de Rham algebra of $S^n$, as both vanish in
degree $2n-1$.  M.~Kapranov has suggested that $g_3$ for $S^2$ may be
realized as a Chern-Simons form on $X = \CC \PP^\infty \setminus \CC
\PP^{\infty-2}$, which is another model of $S^2$: $X = \{ [z_0: z_1 :
  z_2 : \dots ] \in \CC \PP^\infty \mid z_0 \ne 0 \text{ or } z_1 \ne
0\} $ deformation retracts to the subspace $0 = z_2 = z_3 = \dots$,
which is homeomorphic to $\{ [z_0: z_1] \mid z_0 \ne 0 \text{ or } z_1
\ne 0\} = \CC \PP^1 \cong S^2 $. Let $\omega_2$ be the
\emph{curvature} form of the Chern connection associated to the
standard Hermitian metric on $\cO(1)$, the dual tautological line
bundle on $\CC \PP^\infty$. The cohomology class of $\tfrac{i}{2 \pi}
\omega_2$ is the first Chern class $c_1(\cO(1))$ of the line bundle
$\cO(1)$. The cohomology class of $\omega_2$ freely generates the real
cohomology $\RR[\omega_2]$ of $\CC \PP^\infty$ and no power of
$\omega_2$ vanishes on the dense open $X \subset \CC
\PP^\infty$. However, the cohomology class of $\omega_2 \wedge
\omega_2$ in $H^4(X; \RR) \cong H^4(S^2; \RR) = 0$ vanishes and hence
there exists a 3-form $\omega_3$ on $X$ such that $d \omega_3 =
\omega_2 \wedge \omega_2$. In this sense, $\omega_3$ is the
rudimentary, abelian version of a Chern-Simons form, being associated
to a connection on a complex line bundle, rather than an
$\SU(n)$-bundle for $n \ge 2$. Note that the whole DGCA $M(S^2) =
\RR[g_2, g_3]$ may be realized as the sub-DGCA $\RR[\omega_2,
  \omega_3]$ of the de Rham algebra $\Omega^\bullet(X)$, which is
another algebraic model of $S^2 \sim X$:
\[
M(S^2) = \RR[g_2, g_3] \cong \RR[\omega_2, \omega_3]
\xhookrightarrow[]{\qis} \Omega^\bullet(X).
\]
\end{example}

\subsection{Rational algebraic models of spaces}
\label{ratlmodels}

Here we define the polynomial de Rham algebra $A_{\PL} (X)$ for a
topological space $X$. We will do it by considering the simplicial set
$\Sing (X)$ of singular simplices of $X$ and gluing the algebra out of
polynomial differential forms on each singular simplex.

The \emph{polynomial de Rham algebra} or the \emph{algebra of
polynomial differential forms on the standard $n$-simplex}
\[
\Delta^n \coeq \left\{ (t_0,\dots,t_n)\in\mathbf{R}^{n+1} ~\Bigg|~ \sum_{i
  = 0}^n t_i = 1 \text{ and } t_i \ge 0 \text{ for } i = 0, \ldots,
n\right\}
\]
is the DGCA
\[
\Omega^\bullet([n]) \coeq \QQ[t_0, ..., t_n, dt_0,
  ..., dt_n]/(\sum t_i - 1, \sum dt_i).
\]
Varying $n$, we get a simplicial DGCA,
$\Omega^\bullet$. For any simplicial set $K$, define the
\emph{polynomial de Rham algebra of $K$} as
\[
A_{\PL}(K) \coeq \Hom_{\sSet} (K, \Omega^\bullet).
\]
Here $\Hom_{\sSet}$ stands for the set of morphisms of simplicial
sets, i.e., maps of sets $K([n]) \to
\Omega^\bullet([n])$ for each $n \ge 0$ respecting the
face and degeneracy maps, or equivalently, defining natural
transformations between the corresponding functors $K \co \Delta \to
\Set$ and $\Omega^\bullet \co \Delta \to \Set$ on the
simplex category $\Delta$, where $\Set$ is the category of sets.
The structure of a DGCA on $A_{\PL}(K)$ comes from that on each
$\Omega^\bullet([n])$. Finally, using the simplicial set
$\Sing (X)$ of singular simplices of a space, we define the
\emph{polynomial de Rham algebra of a space $X$}:
\[
A_{\PL}(X) \coeq \Hom_{\sSet} (\Sing (X), \Omega^\bullet).
\]
For a path-connected space $X$, this DGCA is connected, called a
\emph{$($rational$)$ model of $X$}, and its minimal Sullivan model is
called the \emph{Sullivan minimal model $M(X)$ of $X$}. The minimal
Sullivan model of a space is unique up to isomorphism. For good enough
spaces, (the isomorphism class of) the Sullivan minimal model
determines the space up to rational equivalence, that is to say,
\[
X \sim_\QQ Y \qquad \text{if and only if} \qquad M(X) \cong M(Y).
\]
Good enough spaces include spaces of finite $\QQ$-type which are
simply connected, or, more generally, nilpotent, see definitions in
\Cref{MainThm}. For example, if the Sullivan minimal model of a simply
connected space is $\QQ[g_4, g_7]$ with $\abs{g_4} = 4$, $\abs{g_7} =
7$, and $dg_4 = 0$, $dg_7 = -\tfrac{1}{2} g_4^2$, we may rest assured
that this space is rationally equivalent to the sphere $S^4$, see
\Cref{even}.

\subsection{Realizations}

In the previous section, we associated a DGCA to every topological
space. We did that in two steps, going first from spaces to simplicial
sets and then to DGCAs.
Here we will discuss a construction that produces a topological space
for a given DGCA, also in two steps. Let $A$ be a nonnegatively graded
DGCA. Its \emph{simplicial realization} is
the simplicial set
\[
K_{\SR}(A) \coeq \Hom_{\DGCA}(A, \Omega^\bullet),
\]
where $\DGCA$ is the category of nonnegatively graded DGCAs.
This construction actually defines a (contravariant) functor $K_{\SR}
\co \DGCA \to \sSet$, which turns out to be left adjoint for the
(contravariant) polynomial de Rham functor $A_{\PL} \co \sSet \to
\DGCA$ of the previous section. More about this is in
\Cref{model-cat}, in which we discuss model categories.

To pass to topological spaces, we can further apply the functor of
\emph{geometric realization}:
\[
K \mapsto \abs{K} \coeq \colim \left(\xymatrix{
  \coprod_{[n] \to [m]} K([m])\times \Delta^n
  \ar@<.5ex>[r]
\ar@<-.5ex>[r]  &
\coprod_{[n]} K([n])\times \Delta^n
}\right).
\]
In other words, we take the disjoint union $\coprod_{n} K([n])\times
\Delta^n$ of copies of the standard simplex $\Delta^n$, one for each
$n$-simplex of $K$, for all $n \ge 0$, and glue their faces as
prescribed by the face maps of $K$ and degenerate them as prescribed
by the degeneracy maps of $K$. This functor turns out to be left
adjoint for the singular simplex functor $\Sing$.

Finally, a \emph{spatial realization} of a DGCA $A$ is the geometric
realization of the simplicial realization:
\[
X_{\SR}(A) \coeq \abs{K_{\SR}(A)}.
\]
Unfortunately, this functor is not the left adjoint of the polynomial
de Rham functor $A_{\PL}(X)$: the contravariance of $K_{\SR}$ messes
things up, and we end up composing two covariant functors, to be on
the safe side:
\begin{equation*}
 \xymatrixrowsep{1pc}
  \xymatrix{
    {\DGCA} \ar[r]^{{K_{\SR}}}
    & {\sSet}^{\op}  \ar[r]^{{\abs{-}}^{\op}}
    & {\Top}^{\op},\\
  A  \ar@{|->}[r]  & K_{\SR}(A) \ar@{|->}[r] & \abs{K_{\SR}(A)},
  }
\end{equation*}
one which, $K_{\SR}$, is a left adjoint functor and the other one,
$\abs{-}^{\op}$, is a right adjoint, being the opposite of a left
adjoint functor. The notation in the first row is spelled out below in
\Cref{model-cat}. However, the homotopy theory in $\sSet$ and $\Top$
is just the same, as the functors
\[
      \xymatrix{
\abs{-} \co \sSet \ar@/^/[r] & \Top : \Sing \ar@/^/[l]
      }
\]
constitute a Quillen equivalence, and every topologist feels as
comfortable in $\sSet$ as in $\Top$, perhaps even more so. The details
are also explained in \Cref{model-cat}.

\section{Rationalization}

A rationalization may be viewed as a rational topological model of a
space, categorically dual to the Sullivan models considered above. A
path-connected space is called \emph{rational} if the $\ZZ$-module
$H_n(X; \ZZ)$ is a $\QQ$-vector space for each $n \ge 1$. (This means
unique divisibility of each element by natural numbers.) A
\emph{rationalization} of a path-connected space $X$ is a rational
space $X_\QQ$ along with a rational equivalence $X \to X_\QQ$. A
rationalization of a simply connected space is unique up to weak
homotopy equivalence and thereby called \emph{the} rationalization.
In a suitable model-category structure in which the weak equivalences
are the rational homotopy equivalences, the rationalization $X_\QQ$ is
a fibrant replacement of $X$, see more on model categories below. We
will present a construction of rationalization in \Cref{rtlztn}.

\section{Model categories}
\label{model-cat}

W.~G. Dwyer \cite{Dwyer.htacs.2008} taught us that ideally a homotopy
theory is determined just by defining the class of weak equivalences
without invoking other ingredients of a model category. Thus, if the
reader prefers to keep the technicalities at bay, they may wish to pay
attention only to the classes of weak equivalences in our model
categories.

A \emph{model-category} structure on a category is characterized by
the choice of three classes of morphisms satisfying certain
properties, see \cite{Dwyer.Spalinski, Hovey}: weak equivalences,
fibrations, and cofibrations. These definitions are modeled upon the
classes of homotopy equivalences, Hurewicz fibrations, and
cofibrations in the category of topological spaces, but this is not
the most commonly used model-category structure on topological spaces
and not the one we will use here. The \emph{standard model category
$\Top$ of topological spaces} selects the following three classes of
morphisms:
\begin{itemize}
\item \emph{weak equivalences}: weak homotopy equivalences;
\item \emph{fibrations}: Serre fibrations;
\item \emph{cofibrations}: continuous maps which have the left lifting
  property (LLP) with respect to Serre fibrations which are weak
  equivalences.
\end{itemize}
Here we use a more general notion of a weak homotopy equivalence than
the one defined earlier in \Cref{RHC} for simply connected spaces:
\begin{quote}
  For general spaces, we say that $f: X \to Y$ is a \emph{weak
  $($homotopy$)$ equivalence} if the induced map $f_*: \pi_n(X, x) \to
  \pi_n(Y, f(x))$ is an isomorphism for each $n \ge 0$.
\end{quote}
For simply connected spaces, this notion is equivalent to the one
which substitutes homology for homotopy, because of the relative
Hurewicz theorem, see the argument in \cite[Corollary 4.33]{Hatcher}.

On our ``intermediary'' category, $\sSet$ of simplicial sets, we
impose a model category structure as follows:
\begin{itemize}
\item \emph{weak equivalences}: simplicial maps whose geometric
  realization is a weak homotopy equivalence;
\item \emph{cofibrations}: simplicial inclusions;
\item \emph{fibrations}: simplicial maps which have the right lifting (RLP)
  property with respect to cofibrations which are weak equivalences.
\end{itemize}
Fibrations in $\sSet$ are known as \emph{Kan fibrations} and may be
characterized by the RLP just with respect to inclusions of horns into
the simplicial representations of simplices, somewhat similar to Serre
fibrations.

Finally, the \emph{model category $\DGCA$} of DGCAs is the category of
(by default, nonnegatively graded and over $\QQ$) DGCAs with
\begin{itemize}
\item quasi-isomorphisms as \emph{weak equivalences};
\item surjective homomorphisms as \emph{fibrations};
\item homomorphisms which have the LLP with respect to fibrations
  which are weak equivalences as \emph{cofibrations}.
\end{itemize}

The axioms of a model category guarantee that every model category
possesses an initial object and a terminal object. An object $Q$ is
called \emph{cofibrant} if the unique morphism from the initial object
to $Q$ is a cofibration. An object $R$ is called \emph{fibrant} if the
unique morphism from $R$ to the final object is a fibration. It is a
fact that space is fibrant, every simplicial set is cofibrant, and
every DGCA is fibrant. A CW complex is an example of a cofibrant
topological space. Fibrant simplicial sets are exactly the \emph{Kan
complexes}; they are characterized by the horn-filling
condition. Sullivan algebras are known to be cofibrant.

The \emph{homotopy category} of a model category $\MM$ is the
localization $\Ho(\MM) \coeq \MM[\WW^{-1}]$ of $\MM$ with respect to
the class $\WW$ of weak equivalences. The localization formally adds
inverses of weak equivalences to the morphisms of $\MM$. One can
develop the notion of \emph{homotopy between morphisms} in a model
category and show that the homotopy category $\Ho(\MM)$ is equivalent
to the category whose objects are fibrant cofibrant objects of $\MM$
and whose morphisms are homotopy classes of morphisms of $\MM$,
cf.\ \cite[Theorem 3.29]{Holstein}.

\begin{thm}[{\cite[Proposition 3.1.5]{Hovey}},
    {\cite[Theorem 17.5.2]{May.Ponto}}]
The functor $\abs{-}$ of geometric realization and the singular
simplex functor $\Sing$ define a Quillen equivalence
  \[
      \xymatrix{
\abs{-} \co \sSet \ar@/^/[r] & \Top : \Sing \ar@/^/[l]
      }.
\]
\end{thm}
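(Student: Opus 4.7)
The plan is to verify the three standard ingredients of a Quillen equivalence: the adjunction itself, compatibility with the two model structures (Quillen adjunction), and the derived unit/counit condition.

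First, I would establish that $\abs{-} \dashv \Sing$ is an adjoint pair. Because $\Sing(X)_n = \Hom_{\Top}(\Delta^n, X)$ and $\abs{K}$ is built as the coequalizer of copies of standard simplices glued along face and degeneracy maps, the natural bijection
\[
\Hom_{\Top}(\abs{K}, X) \cong \Hom_{\sSet}(K, \Sing X)
\]
follows by a routine colimit argument combined with the Yoneda lemma in $\sSet$ applied to the representables $\Delta^n$. Second, I would check this is a Quillen adjunction. For the left adjoint, simplicial inclusions $K \hookrightarrow L$ have geometric realizations that are relative CW inclusions, hence cofibrations in the standard model structure on $\Top$. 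For trivial cofibrations I prefer to dualize: $\Sing$ sends Serre fibrations to Kan fibrations because under the adjunction the horn inclusions $\Lambda^n_k \hookrightarrow \Delta^n$ in $\sSet$ correspond to the topological horn inclusions characterizing Serre fibrations; preservation of trivial fibrations is analogous using boundary inclusions, and $\Sing$ preserves weak equivalences essentially by definition, since weak equivalences in $\sSet$ are declared to be the simplicial maps whose realizations are weak homotopy equivalences.

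Third, I would verify the Quillen equivalence condition. Every simplicial set is cofibrant and every space is fibrant, so the condition reduces to the following pair of statements: for each space $X$ the counit $\epsilon_X \co \abs{\Sing X} \to X$ is a weak homotopy equivalence, and for each simplicial set $K$ the unit $\eta_K \co K \to \Sing \abs{K}$ is a weak equivalence in $\sSet$. The second follows from the first by 2-out-of-3 and the triangle identity $\epsilon_{\abs{K}} \circ \abs{\eta_K} = \id_{\abs{K}}$, since weak equivalences in $\sSet$ are detected after $\abs{-}$. So the whole equivalence reduces to the assertion about $\epsilon_X$.

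The main obstacle, and the only nontrivial step, is showing $\epsilon_X \co \abs{\Sing X} \to X$ is a weak homotopy equivalence for every topological space $X$, which is the classical Giever--Milnor theorem. My approach would be: the space $\abs{\Sing X}$ is a CW complex by construction, so $\epsilon_X$ is effectively a CW approximation, and it suffices to show it induces isomorphisms on all $\pi_n$. Given a based map $f \co (S^n, *) \to (X, x)$, one triangulates $S^n$ finely enough that $f$ is homotopic to a map through which every top-dimensional simplex factors as a singular simplex, exhibiting a lift of $[f]$ to $\pi_n(\abs{\Sing X})$; the same subdivision argument applied to nullhomotopies shows injectivity. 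For the complete argument I would cite \cite[Proposition 3.1.5]{Hovey} and \cite[Theorem 17.5.2]{May.Ponto} as in the statement. Once the counit is a weak equivalence, the reductions above deliver the full Quillen equivalence.
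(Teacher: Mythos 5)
The paper does not prove this theorem at all: it is stated as a classical black box, cited to Hovey and to May--Ponto, and the surrounding text only unpacks what ``Quillen adjunction'' and ``Quillen equivalence'' mean. So there is no internal proof to compare against, and your proposal supplies strictly more detail than the paper does. Your reductions are the standard ones and they are sound: the adjunction via the colimit description of $\abs{-}$ and the representables; the Quillen adjunction via $\Sing$ preserving fibrations and trivial fibrations (by adjointness, since the realizations of the horn inclusions $\Lambda^n_k \hookrightarrow \Delta^n$ and of the boundary inclusions are exactly the test maps characterizing Serre fibrations and trivial Serre fibrations); and the reduction of the equivalence condition to the counit statement, using that every simplicial set is cofibrant, every space is fibrant, the triangle identity $\epsilon_{\abs{K}} \circ \abs{\eta_K} = \id_{\abs{K}}$, and the fact that weak equivalences in $\sSet$ are by definition detected by $\abs{-}$. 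In effect you have shown that the theorem is equivalent to Milnor's theorem that $\epsilon_X \co \abs{\Sing X} \to X$ is a weak homotopy equivalence, and you cite that single ingredient where the paper cites the whole statement.

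Two caveats. First, your remark that ``$\Sing$ preserves weak equivalences essentially by definition'' is not right: with the stated model structure on $\sSet$, that claim already presupposes the counit theorem, since $\abs{\Sing f}$ must be compared to $f$ through the counits. It is harmless only because your Quillen-adjunction check never actually uses it; preservation of fibrations and trivial fibrations suffices. Second, your sketch of the counit theorem is too optimistic. Surjectivity on $\pi_n$ is indeed cheap: view $S^n$ as $\abs{\partial\Delta^{n+1}}$ and take the adjoint of the given map, so no fine subdivision is even needed. But injectivity is not ``the same subdivision argument'': a map $S^n \to \abs{\Sing X}$ need not be simplicial, and the nullhomotopy downstairs must be lifted coherently against the counit; the proofs in your references run through minimal Kan fibrations (Hovey) or comparable machinery, not simplicial approximation alone. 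Since you explicitly defer to those references for the complete argument, this is a flaw of presentation rather than a logical gap, but calling it ``the only nontrivial step'' and then gesturing at it undersells what the cited theorem costs.
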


The theorem means that these functors establish a Quillen adjunction
and induce an equivalence of the homotopy categories. The
\emph{Quillen adjunction} part means it is an adjunction, i.e., there
is a natural bijection
\[
\Hom_{\Top} (\abs{K}, X) \xrightarrow{\sim} \Hom_{\sSet} (K, \Sing(X))
\]
for simplicial sets $K$ and spaces $X$, and that the functor $\abs{-}$
takes cofibrations to cofibrations and $\Sing$ takes fibrations to
fibrations. A Quillen adjunction always yields the adjunction of the
corresponding (left and right) \emph{derived functors}, in this case
$L\abs{-}$ and $R\Sing$, that is to say, the functors induced on the
homotopy categories:
      \begin{equation}
        \label{sp}
        \xymatrix{
L\abs{-} \co  \Ho(\sSet) \ar@/^/[r] & \Ho(\Top) : R\Sing \ar@/^/[l]
      }.
      \end{equation}
The left derived functor is defined by taking a cofibrant replacement
before applying the left adjoint functor and a fibrant replacement
before applying the right adjoint functor. This works both at the
level of objects and morphisms. In our particular case, since all
simplicial sets are cofibrant and topological spaces are fibrant, we
bypass the replacements at the level of objects: $L\abs{K} = \abs{K}$
and $R\Sing(X) = \Sing (X)$. We still need to replace maps between
simplicial sets by equivalent simplicial inclusions and maps between
spaces by Serre fibrations. All the above replacements exist by the
axioms of model categories.
      
The \emph{Quillen equivalence} part of the theorem statement says that
the functors between the original model categories establish an
equivalence of their homotopy categories. This means that the derived
functors are not only adjoint, but also full and faithful. All in all,
this implies that homotopy theory in $\Top$ is the ``same'' as in
$\sSet$. Choose whichever is more convenient!

\section{The fundamental theorem of RHT}
\label{MainThm}

The fundamental theorem of RHT in a dream world would have said that
the functor $X_{\SR}$ of spatial realization and the polynomial de
Rham algebra functor $A_{\PL}$ establish a Quillen equivalence between
the categories $\Top$ and $\DGCA$. Unfortunately, this does not quite
work, because the spatial realization functor $X_{\SR} =
\abs{K_{\SR}}$ is neither a left, nor a right adjoint functor. There
are also other technicalities which creep in the way, such as the
finiteness and nilpotency conditions. This is another realization, in
addition to the geometric, simplicial, and spatial ones: this time, it
is a sad realization. For this reason, we completely abandon the
category of spaces for the rest of this section and work with the
category of simplicial sets instead.

      \begin{thm}[\cite{Bousfield.Gugenheim} and {\cite[Lemma 6.17]{Holstein}}]
  The functor $K_{\SR}(A)$ of simplicial realization and the
  polynomial de Rham algebra functor $A_{\PL}(K)$ form a Quillen
  adjunction:
  \begin{equation}
    \label{Q-adj}
      \xymatrix{
K_{\SR} \co  \DGCA \ar@/^/[r] & \sSet^{\op} : A_{\PL} \ar@/^/[l]
      }.
      \end{equation}
\end{thm}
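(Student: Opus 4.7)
I would verify the two requirements of a Quillen adjunction in turn: first the underlying adjunction \eqref{Q-adj}, then its compatibility with the model structures.

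\emph{The adjunction.} The key point is that $\Omega^\bullet$ is simultaneously a simplicial object (used by $A_{\PL}$) and a DGCA (used by $K_{\SR}$). Unwinding definitions, a simplicial map $\varphi\co K \to K_{\SR}(A) = \Hom_{\DGCA}(A, \Omega^\bullet)$ is precisely a coherent family, indexed by $n \ge 0$ and $k \in K([n])$, of DGCA homomorphisms $\varphi_n(k)\co A \to \Omega^\bullet([n])$. Reorganizing the datum by fixing $a \in A$ first, and recording for each $k \in K([n])$ the element $\varphi_n(k)(a) \in \Omega^\bullet([n])$, yields a simplicial map $K \to \Omega^\bullet$ of degree $\abs{a}$; these assemble into a DGCA homomorphism $A \to \Hom_{\sSet}(K, \Omega^\bullet) = A_{\PL}(K)$. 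The resulting bijection $\Hom_{\sSet}(K, K_{\SR}(A)) \cong \Hom_{\DGCA}(A, A_{\PL}(K))$ is manifestly natural in both $A$ and $K$, giving the adjunction.

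\emph{The Quillen condition.} It suffices to show that the right adjoint $A_{\PL}\co \sSet^{\op} \to \DGCA$ preserves both fibrations and weak equivalences, which together subsume preservation of acyclic fibrations. Fibrations in $\sSet^{\op}$ are simplicial inclusions $i\co K \hookrightarrow L$, and I need $A_{\PL}(i)\co A_{\PL}(L) \to A_{\PL}(K)$ to be surjective. Since $A_{\PL}$ turns colimits in $\sSet$ into limits in $\DGCA$, a skeletal induction reduces this to showing that the restriction $\Omega^\bullet([n]) = A_{\PL}(\Delta^n) \to A_{\PL}(\partial \Delta^n)$ is surjective, i.e., to the extendability of any compatible system of polynomial forms on the faces of $\partial \Delta^n$ to a polynomial form on all of $\Delta^n$. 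I would prove this by an explicit construction in the barycentric coordinates, honoring the relations $\sum t_i = 1$ and $\sum dt_i = 0$. Preservation of weak equivalences is then an immediate consequence of the polynomial de Rham theorem $H^\bullet(A_{\PL}(K)) \cong H^\bullet(\abs{K}; \QQ)$: any weak equivalence of simplicial sets induces a rational cohomology isomorphism and thereby a quasi-isomorphism on $A_{\PL}$.

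\emph{Main obstacle.} The technical heart is the extendability of $\Omega^\bullet$: constructing the explicit polynomial extension across the top face requires some ingenuity in manipulating the relations in the defining ideal and verifying compatibility with degeneracies. The polynomial de Rham comparison $H^\bullet(A_{\PL}(K)) \cong H^\bullet(\abs{K}; \QQ)$ is also non-trivial; it is standardly established by an acyclic-models or Mayer-Vietoris argument, reducing to the Poincar\'e lemma on simplices and the observation that $\Omega^\bullet([n])$ is acyclic in positive degree.
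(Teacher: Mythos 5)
Your proposal is correct and takes essentially the approach of the paper's sources: the survey states this theorem without proof, citing Bousfield--Gugenheim and Holstein, and their argument is exactly your plan --- the hom-swap bijection $\Hom_{\sSet}(K, \Hom_{\DGCA}(A,\Omega^\bullet)) \cong \Hom_{\DGCA}(A, \Hom_{\sSet}(K,\Omega^\bullet))$ through the simplicial DGCA $\Omega^\bullet$, followed by verifying that $A_{\PL}$ sends simplicial inclusions to surjections via extendability of $\Omega^\bullet$ (skeletal reduction to $\partial\Delta^n \subset \Delta^n$) and sends weak equivalences to quasi-isomorphisms via the polynomial de Rham theorem. No gaps to report.
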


Thus, the theorem means that there is a natural bijection
\[
\Hom_{\sSet} (L, K_{\SR}(B)) \xrightarrow{\sim} \Hom_{\DGCA} (B, A_{\PL}(L))
\]
for simplicial sets $L$ and DGCAs $B$ and the functors $K_{\SR}$ and $A_{\PL}$
take cofibrations to fibrations. This implies that the above
adjunction yields the adjunction of the corresponding (left and right)
derived functors $LK_{\SR}$ and $RA_{\PL}$, that is to say, the
functors induced on the homotopy categories:
\[
      \xymatrix{
LK_{\SR} \co  \Ho(\DGCA) \ar@/^/[r] & \Ho(\sSet^{\op}) : RA_{\PL} \ar@/^/[l]
      }.
      \]
Note that since the derived functors are the original functors
precomposed with cofibrant and fibrant replacements, $LK_{\SR} (A) =
K_{\SR}(M(A))$, where $M(A) \to A$ is a minimal Sullivan (or any other
cofibrant) model of a DGCA $A$, and $RA_{\PL}(K) = A_{\PL}(K)$, just
because fibrant objects in the opposite category $\sSet^{\op}$
cofibrant in the original category $\sSet$, in which every object is
cofibrant.
      
The following result ``upgrades'' the Quillen adjunction of the
previous theorem to something resembling a Quillen equivalence and may
be viewed as the fundamental theorem of RHT. However, it holds only
for certain nice simplicial sets and nice DGCAs, which we will
describe now.

Let $\QQ \Nil^{\fft}$ be the full subcategory of $\sSet$ whose objects
are connected, nilpotent, rational Kan complexes of finite
$\QQ$-type. Let $\Ho(\QQ \Nil^{\fft})$ be the category with the same
objects as $\QQ \Nil^{\fft}$ and the morphisms being homotopy classes
of maps. Here \emph{nilpotent} means that the fundamental group is
nilpotent and acts nilpotently on higher homotopy groups. For
instance, all simply connected spaces and simplicial sets are
trivially nilpotent. Other examples include $S^1 = K(\ZZ,1)$ and more
general Eilenberg-MacLane spaces $K(A,n)$ with $A$ abelian, as well as
path-connected based loop spaces. On the other hand, $\RR \PP^\infty$
is not nilpotent, despite an abelian fundamental group. A space or
simplicial set is of \emph{finite $\QQ$-type} if its rational homology
is finite-dimensional in each degree.

On the algebraic side, let $\DGCA^{\cft}$ denote the full subcategory
of $\DGCA$ whose objects are cofibrant, connected differential graded
commutative algebras of finite type. Define $\Ho(\DGCA^{\cft})$ to be
the category whose objects are those of $\DGCA^{\cft}$ and whose
morphisms are homotopy classes of DGCA-homomorphisms. Sullivan
algebras are known to be cofibrant \cite[Theorem
  8.11]{Berglund.RHT}. A DGCA is of \emph{finite type} if it is
finite-dimensional in each degree.

\begin{thm}[\cite{Bousfield.Gugenheim} and {\cite[Theorem 9.5]{Holstein}}]
  \label{Q-eq}
  The Quillen adjunction \eqref{Q-adj} induces an equivalence of categories
      \[
      \xymatrix{ LK_{\SR} \co \Ho(\DGCA^{\cft}) \ar@/^/[r] & \Ho(\QQ
        \Nil^{\fft})^{\op} : RA_{\PL} \ar@/^/[l] }.
      \]
\end{thm}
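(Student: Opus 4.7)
The plan is to upgrade the Quillen adjunction \eqref{Q-adj} to an equivalence on the specified subcategories in three stages: (i) verify that the derived functors $LK_{\SR}$ and $RA_{\PL}$ preserve the classes $\DGCA^{\cft}$ and $\QQ\Nil^{\fft}$; (ii) verify that the derived counit $LK_{\SR}(RA_{\PL}(K)) \to K$ is a weak equivalence for every $K \in \QQ\Nil^{\fft}$; and (iii) verify that the derived unit $A \to RA_{\PL}(LK_{\SR}(A))$ is a quasi-isomorphism for every $A \in \DGCA^{\cft}$. Because the derived functors are adjoint, (ii) and (iii) are equivalent once (i) is in hand, so it suffices to establish whichever direction is technically cleaner, together with the preservation of the two subcategories.

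The reduction to a tractable set of generators proceeds by tower arguments on both sides. On the algebra side, every object of $\Ho(\DGCA^{\cft})$ is, by \Cref{unique}, modeled by a minimal Sullivan algebra, and finite type allows the Sullivan filtration $0 = V(0) \subset V(1) \subset \dots$ to be refined so that each step $S(V(n-1)) \hookrightarrow S(V(n))$ is a one-generator extension. On the space side, nilpotency is exactly what one needs for $K$ to admit a principal Postnikov tower, and rationality plus finite $\QQ$-type forces the fibers of that tower to be rational Eilenberg-MacLane spaces $K(\QQ^{k_n}, n)$. The functors $K_{\SR}$ and $A_{\PL}$ convert such principal extensions on one side into principal extensions on the other, up to quasi-isomorphism, so if the equivalence is proved stage by stage along these towers, it assembles to the full statement by a Mayer--Vietoris / five-lemma comparison on cohomology or on homotopy groups.

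The base case is the identification of the minimal Sullivan models of $K(\QQ, n)$, which parallels \Cref{even}: for $n$ odd one obtains $\QQ[g_n]$ with $dg_n=0$, and for $n$ even one obtains $\QQ[g_n, g_{2n-1}]$ with $dg_{2n-1} = g_n^2$. A direct computation using the definition of $A_{\PL}$ and the standard cellular model of $K(\QQ,n)$ shows that $A_{\PL}$ is quasi-isomorphic to these, and dually that $K_{\SR}$ applied to them recovers a rational space with the correct single nonzero rational homotopy group. Preservation of the subcategories in (i) follows from the same tower picture: $K_{\SR}$ of a minimal Sullivan algebra of finite type is a connected, nilpotent, rational Kan complex of finite $\QQ$-type because the simplicial DGCA $\Omega^\bullet$ is Kan fibrant and the tower exhibits the homotopy groups degree by degree.

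The main obstacle is the inductive step of (ii): verifying that $A_{\PL}$ takes a principal fibration $K(\QQ^{k_n}, n) \to E_n \to E_{n-1}$ in $\QQ\Nil^{\fft}$ to a Sullivan extension of DGCAs up to quasi-isomorphism, so that a cohomology spectral sequence or five-lemma argument can compare the two towers degree by degree. Finite $\QQ$-type is essential to keep the Eilenberg--Moore type spectral sequence convergent and to control the cohomology of $A_{\PL}(K(\QQ,n))$; nilpotency is essential to make the Postnikov tower principal; and the rationality hypothesis is what ensures that the algebraic model, which lives over $\QQ$, can in fact detect the full homotopy type. Dropping any one of these conditions invalidates the corresponding step in the induction, which is why the equivalence is stated only on $\QQ\Nil^{\fft}$ and $\DGCA^{\cft}$.
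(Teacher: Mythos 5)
Your overall architecture --- comparing a principal Postnikov tower of a connected, nilpotent, rational Kan complex of finite $\QQ$-type with the filtration of a minimal Sullivan algebra by one-generator extensions, and checking unit/counit stage by stage --- is indeed the classical Bousfield--Gugenheim route; the paper itself gives no proof of \Cref{Q-eq} but simply cites \cite{Bousfield.Gugenheim} and Holstein, and your sketch is the standard argument behind those citations. However, your base case contains a genuine error. The minimal Sullivan model of the Eilenberg--MacLane space $K(\QQ,n)$ is the free graded commutative algebra $S(\QQ g_n)$ on a single degree-$n$ generator with \emph{zero} differential for \emph{every} $n$: for $n$ odd this is the two-dimensional exterior algebra, while for $n$ even it is the full polynomial algebra $\QQ[g_n]$, since $H^\bullet(K(\QQ,n);\QQ)$ is polynomial on one degree-$n$ class. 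The algebra $\QQ[g_n,g_{2n-1}]$ with $dg_{2n-1}=g_n^2$ that you quote is the model of the even-dimensional sphere $S^n$ (\Cref{even}), which has two nonzero rational homotopy groups and is not an Eilenberg--MacLane space. This is not cosmetic: the inductive dictionary you rely on matches a one-generator Sullivan extension in degree $n$ with a principal fibration with fiber $K(\QQ^{k},n)$, and with the sphere model inserted as base case that dictionary breaks in every even degree (for instance, the extension $\QQ \subset \QQ[g_2]$, i.e.\ the model of $K(\QQ,2)\simeq \CC\PP^\infty_\QQ$, would correspond to no fiber at all). The correct statement, $A_{\PL}(K(\QQ^k,n))\simeq (S(V),0)$ with $V=\QQ^k$ concentrated in degree $n$, is exactly what makes the stage-by-stage comparison close up.

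A second, lesser gap: your claim that (ii) and (iii) are equivalent ``because the derived functors are adjoint'' is false in general --- for an adjunction restricted to preserved subcategories, the unit can be invertible while the counit is not (reflective subcategories are the standard counterexample). What rescues the reduction here is that $RA_{\PL}$ is conservative on $\Ho(\QQ\Nil^{\fft})$: a map of connected, nilpotent, rational spaces of finite $\QQ$-type that induces an isomorphism on rational cohomology is a weak equivalence (the rational Whitehead theorem for nilpotent spaces). Granting that, the triangle identity shows that invertibility of the unit on the algebra side forces invertibility of the counit on the space side. This conservativity is a nontrivial ingredient in its own right --- it is precisely where rationality and nilpotency of the spaces enter --- so it must be stated and proved (or cited), not absorbed into a formal adjointness remark.
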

Here the homotopy categories are the categories with the same objects
but morphisms replaced with homotopy classes of morphisms.

\begin{crl}[{\cite[Corollary 10.2]{Holstein}}, {\cite[Corollary 8.47]{Berglund.RHT}}]
Let $M(K) \xrightarrow{\qis} A_{\PL}(K)$ be a Sullivan minimal model
of a connected nilpotent simplicial set $K$ of finite $\QQ$-type.
Then the adjoint map $K \to K_{\SR} (M(K))$ is a rationalization $K
\to K_\QQ$ of $K$.
\end{crl}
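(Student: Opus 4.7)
The map in question is the transpose under the Quillen adjunction~\eqref{Q-adj} of the given quasi-isomorphism $f\co M(K)\xrightarrow{\qis}A_{\PL}(K)$; explicitly, it is the map $\phi\co K\to K_{\SR}(M(K))$ obtained by applying the natural bijection $\Hom_{\DGCA}(M(K),A_{\PL}(K))\cong\Hom_{\sSet}(K,K_{\SR}(M(K)))$. To identify $\phi$ with a rationalization of $K$, I will verify two things: that the target $K_{\SR}(M(K))$ is a rational space, and that $\phi$ is a rational equivalence.

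For rationality I would invoke \Cref{Q-eq}. The Sullivan minimal model $M(K)$ is cofibrant, and a standard inductive argument over its Sullivan filtration---using that the indecomposables $V=M(K)^{>0}/(M(K)^{>0})^{2}$ are dual to the rational homotopy groups (cf.~\eqref{homotopy}) and that the nilpotency and finite-$\QQ$-type hypotheses on $K$ force each $\pi_n(K)\otimes\QQ$ to be finite-dimensional---shows that $M(K)\in\DGCA^{\cft}$. Cofibrancy yields $LK_{\SR}(M(K))=K_{\SR}(M(K))$, and the equivalence in \Cref{Q-eq} places $K_{\SR}(M(K))$ in $\QQ\Nil^{\fft}$; in particular it is a rational Kan complex.

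For the rational equivalence I would use the triangle identity of the adjunction: if $\eta_{M(K)}\co M(K)\to A_{\PL}(K_{\SR}(M(K)))$ denotes the unit, then $A_{\PL}(\phi)\circ\eta_{M(K)}=f$. Since the Quillen adjunction~\eqref{Q-adj} is upgraded to a Quillen equivalence on the restricted subcategories by \Cref{Q-eq}, the derived unit is a weak equivalence at every cofibrant object of $\DGCA^{\cft}$, so $\eta_{M(K)}$ is a quasi-isomorphism. Combined with $f$ being a quasi-isomorphism, two-out-of-three forces $A_{\PL}(\phi)$ to be a quasi-isomorphism. Because $H^{\bullet}(A_{\PL}(-))\cong H^{\bullet}(-;\QQ)$, this yields an isomorphism $\phi^{*}\co H^{\bullet}(K_{\SR}(M(K));\QQ)\xrightarrow{\sim}H^{\bullet}(K;\QQ)$; the finite-$\QQ$-type hypothesis then lets me dualize degree by degree to get the corresponding isomorphism on rational homology, so $\phi$ is a rational equivalence as required.

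The main obstacle is the claim that the unit $\eta_{M(K)}$ is a quasi-isomorphism; this is not a formal consequence of the bare adjunction but rather the substantive content of \Cref{Q-eq}, which I am treating as a black box here. A secondary subtlety is confirming that the finite-$\QQ$-type and nilpotency assumptions on $K$ really do force $M(K)$ into $\DGCA^{\cft}$, which is what makes \Cref{Q-eq} applicable at $M(K)$ in the first place and requires a careful induction along the Sullivan filtration.
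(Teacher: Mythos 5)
Your proposal is correct and takes essentially the approach the paper intends: the statement is presented as a direct corollary of \Cref{Q-eq} (following the cited proofs of Holstein and Berglund), namely checking that $M(K)$ lies in $\DGCA^{\cft}$, so that the equivalence both places $K_{\SR}(M(K))$ in $\QQ\Nil^{\fft}$ and makes the unit $M(K) \to A_{\PL}(K_{\SR}(M(K)))$ a quasi-isomorphism, whence the adjoint map is a rational equivalence onto a rational Kan complex. The only cosmetic remark is that your last dualization step needs no finiteness hypothesis: over a field, a map inducing an isomorphism on cohomology automatically induces one on homology, since dualization reflects isomorphisms of vector spaces.
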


\begin{remark}
  \label{rtlztn}
Similarly, given a path-connected nilpotent CW complex $X$ of
$\QQ$-finite type, take the adjoint $\Sing(X) \to K_{\SR}(M(X))$ of
the Sullivan minimal model $M(X) \xrightarrow{\qis} A_{\PL}(X) =
A_{\PL} (\Sing (X))$. From the derived equivalence \eqref{sp} between
singular sets and spaces, we get a weak equivalence $\abs{\Sing(X)}
\to X$ (the \emph{counit of adjunction}), which is a homotopy
equivalence by Whitehead's theorem. We can compose a homotopy inverse
to get a continuous map $X \to \abs{\Sing (X)} \to X_{\SR}(M(X))$,
which will be a rationalization $X \to X_\QQ$ of $X$.
\end{remark}

\section{Quillen minimal models of spaces}
\label{Quillen-models}

The advantage of Sullivan models over historically earlier Quillen's
models derives from the fact that Sullivan models are more
user-friendly: they are based on more familiar constructions, such as
algebras and differential forms, both known since the late 19th
century. Quillen minimal models use coalgebras and free Lie algebras,
or in modern interpretation, $L_\infty$-algebras, all of which are
less ingrained in the brain of a practitioner and are viewed as less
user-friendly by the general public.

\subsection{Quillen's DGLA model $\lambda X$ of a space $X$}

In the foundational paper \cite{Quillen.1969}, Quillen defines a
sequence of left and right Quillen equivalences, carefully avoiding
calling them that way. At the end of the day, these functors assign to
a simply connected topological space $X$ a differential graded Lie
algebra (DGLA) $\lambda X$,
\[
X \mapsto \lambda X,
\]
defining a functor
\begin{equation}
  \label{lambda}
\lambda: \Top_1 \to \DGLA_0,
\end{equation}
where $\Top_1$ is the category of simply connected spaces and $\DGLA_0$
is the category of \emph{positively graded DGLAs} $L = \bigoplus_{n >
  0} L_n$, with the bracket $[-,-]: L_m \otimes L_n \to L_{m+n}$ being
graded antisymmetric,
\[
  [y,x] = -(-1)^{\abs{x} \cdot \abs{y}} [x,y],
\]
and satisfying a graded version of the Jacobi identity, with a
derivation of degree $-1$:
\[
d[x,y] = [dx,y] + (-1)^{\abs{x}} [x,dy].
\]
Quillen 
constructs $\lambda X$ as the following mouthful:
\[
\lambda X = N \mc{P} \widehat{\QQ} G \Sing (X),
\]
where $G$ associates Kan's loop group to a simplicial set, a
simplicial version of the based loop space, $\widehat{\QQ} G$ is a
completed group algebra, $\mc{P}$ takes the simplicial Lie algebra of
primitive elements, and $N$ maps simplicial Lie algebras to DGLAs by
taking the normalized chains.

Then Quillen uses a similar machinery to that of DGCAs to define the
minimal model of a (positively graded) DGLA and shows that its
isomorphism class determines the rational homotopy type of $X$. Here
we will use the same $\lambda X$ but assign to it a minimal
$L_\infty$-algebra $Q(X)$, rather than a minimal DGLA. We will explain
in which way this is more in line with Sullivan's approach.

\subsection{$\Li$-Algebras}

First, we need a number of algebraic definitions and constructions. A
\emph{$L_\infty$-algebra} is a graded vector space $L = \bigoplus_{n
  \in \ZZ} L_n$ with a codifferential $D$ on the \emph{cofree
coaugemented cocommutative coalgebra}, i.e., the graded symmetric
coalgebra
\[
C(L): = S(L[-1]) = \bigoplus_{n \ge 0} S^n(L[-1]),
\]
where $L[-1] = \bigoplus_{n \ge 2} L[-1]_n$ with $L[-1]_n \coeq
L_{n-1}$ is the suspension of $L$. The comultiplication $\Delta:
S(L[-1]) \to S(L[-1]) \otimes S(L[-1])$ is the standard shuffle
comultiplication, which is coassociative and graded cocommutative. The
coaugmentation is the coalgebra homomorphism $k = S^0(L[-1])
\hookrightarrow S(L[-1])$ and the \emph{codifferential} $D: S(L[-1])
\to S(L[-1])$ is a graded degree-$(-1)$ coderivation,
\[
\Delta (D(x)) = D( x_{(1)}) \otimes x_{(2)} + (-1)^{\abs{x_{(1)}}}
x_{(1)} \otimes D(x_{(2)}),
\]
using \emph{Sweedler's notation} $x_{(1)} \otimes x_{(2)} \coeq \Delta
(x)$, such that $D(1) = 0$ and $D^2 =0$. The codifferential on a
cofree coalgebra is determined by the projection onto the cogenerating
space $L[-1]$:
\[
S(L[-1]) \xrightarrow{D} S(L[-1]) \to L[-1],
\]
and the restriction of this projection to $S^n(L[-1])$ is the
\emph{$n$-ary bracket} $l_n$ of the $L_\infty$-algebra $L$:
\[
l_n: S^n(L[-1]) \hookrightarrow S(L[-1]) \xrightarrow{D} S(L[-1]) \to
L[-1], \qquad n \ge 1.
\]
An $\Li$-algebra is a generalization of a DGLA: a DGLA $(L, [-,-], d)$
may be regarded as an $\Li$-algebra with $l_1 = d$, $l_2 = [-,-]$, and
$l_n = 0$ for $n \ge 0$. The condition $D^2=0$ is equivalent to the
graded Jacobi equation for the bracket $[-,-]$ and the derivation
property of $d$, along with the equation $d^2 = 0$. When higher
brackets $l_n$ are present, the equation $D^2$ gives the Jacobi
identity for $l_2$ up to homotopy given by $l_3$, and similar higher
Jacobi identities. The unary bracket $l_1$ is in fact a linear map $L
\to L$ of degree $-1$ such that $(l_1)^2 = 0$, i.e., a differential on
the underlying graded vector space, denoted by $d \coeq l_1$.

An \emph{$\Li$-homomorphism} $L_1 \to L_2$ is defined as a
homomorphism of the corresponding coaugmented dg-coalgebras $C(L_1)
\to C(L_2)$. An \emph{$\Li$-isomorphism} is an invertible
$\Li$-homomorphism, and an \emph{$\Li$-quasi-isomorphism} is an
$\Li$-homomorphism which induces an isomorphism on homology of the
differentials $d$ on $L_1$ and $L_2$.

By definition, the functor $C$, sometimes called the
\emph{Chevalley-Eilenberg functor}, establishes an equivalence of
categories
\begin{equation}
  \label{CE}
C: \LiAlg_0 \xrightarrow{\sim} \DGCC^{\operatorname{cf}}_1,
\end{equation}
where $\LiAlg_0$ is the category of \emph{positively graded
$\Li$-algebras with $\Li$-morphisms} and $\DGCC^{\operatorname{cf}}_1$
is the category of \emph{cofree coaugmented nonnegatively graded
dg-co\-com\-mu\-ta\-tive coalgebras which are simply connected}, those
with $C_0 = \QQ$ and $C_1 = 0$. The restrictions on grading can be
relaxed to nonnegatively graded $\Li$-algebras and connected
nonnegatively graded DGCCs, or completely removed in an obvious way,
but we will be mostly interested in the equivalence \eqref{CE}.

\subsection{Minimal $\Li$-models}

We say that an $\Li$-algebra is \emph{minimal}, if $d = 0$. We say
that an $\Li$-algebra is \emph{contractible} if $l_n = 0$ for $n \ge
2$ and $H_\bullet (L, d) = 0$ for the homology of $d$. Note that this
is not an $\Li$-invariant notion, but it is invariant under
\emph{linear $\Li$-isomorphisms}, i.e., those which are given by
linear maps of the corresponding coalgebras. An \emph{$\Li$-model} of
an $\Li$-algebra $L$ is an $\Li$-algebra $L'$ with an
$\Li$-quasi-isomorphism $L' \xrightarrow{\qis} L$. As in the theory of
minimal Sullivan models, minimal $\Li$-models play a crucial role.

The theory of minimal $\Li$-models for $\Li$-algebras, developed by
M.~Kontsevich in a couple of impressionistic brush strokes
\cite{Kontsevich.dqopm}, is parallel to the theory of minimal Sullivan
models for DGCAs, but actually simpler.

The following theorem is parallel to \Cref{product} for DGCAs.

\begin{thm}[{\cite[Lemma 4.9]{Kontsevich.dqopm}}, {\cite[Proposition 2.6]{Buijs}}]
  \label{sum}
    Every $\Li$-algebra is $L_\infty$-isomorphic to the direct sum of
    a minimal $\Li$-algebra and a contractible one.
\end{thm}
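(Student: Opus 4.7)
The plan is to apply the Homotopy Transfer Theorem to produce a minimal summand $M$ from the homology of $(L, l_1)$, to define the contractible summand $C$ from an acyclic complement, and then to construct the required $\Li$-isomorphism $M \oplus C \to L$ inductively by arity on the cofree dg-coalgebra side of the Chevalley--Eilenberg equivalence \eqref{CE}.

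First I would fix a splitting of the underlying chain complex. Over a field of characteristic zero, $(L, l_1)$ decomposes as $L = H \oplus W$ with $l_1|_H = 0$ and $H \cong H_\bullet(L, l_1)$, and with $W = U \oplus l_1 U$ contractible, where $U$ is any graded complement of $\ker l_1$ in $L$. Fix a degree-$(+1)$ contracting homotopy $h \co L \to L$ that vanishes on $H \oplus U$ and equals $(l_1|_U)^{-1}$ on $l_1 U$; then $l_1 h + h l_1 = \id_L - \pi_H$, $h^2 = 0$, and $\pi_H h = h \pi_H = 0$, where $\pi_H \co L \to H$ is the projection.

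Next I would apply the Homotopy Transfer Theorem (Kontsevich's sum over rooted trees, with internal edges labelled by $h$ and vertices labelled by the brackets $l_n$ of $L$) to induce a minimal $\Li$-structure $\{m_n\}$ on $H$ together with an $\Li$-quasi-isomorphism $I \co M \coeq (H, \{m_n\}) \xrightarrow{\qis} L$ whose linear component is the inclusion. Let $C$ denote the $\Li$-algebra with underlying complex $(W, l_1|_W)$ and all higher brackets zero; then $C$ is contractible in the sense of the statement, and the underlying graded vector space of $M \oplus C$ equals $L$, with unary bracket agreeing with $l_1$.

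It then remains to construct an $\Li$-isomorphism $\Phi \co M \oplus C \to L$ whose linear component is the tautological identification $H \oplus W = L$. Through \eqref{CE} this amounts to a coalgebra automorphism of $S(L[-1])$ intertwining the codifferentials $D_{M \oplus C}$ and $D_L$. Build $\Phi$ inductively by arity: at stage $n \geq 2$, assume the two codifferentials already agree up to arity $n-1$; the discrepancy at arity $n$ is then an $l_1$-cocycle, by squaring the codifferentials and using the $\Li$-relations from earlier stages. On pure $H$-inputs the discrepancy vanishes because $I$ already realizes the transferred brackets $m_n$, so the cocycle is concentrated on tensors involving at least one $W$-factor; acyclicity of $(W, l_1|_W)$ via the homotopy $h$ then yields an explicit primitive $\psi_n$, whose exponential, composed with the previous corrections, gives the arity-$n$ part of $\Phi$. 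The main obstacle is exactly this inductive step: verifying, with the correct symmetrizations and signs, that the arity-$n$ obstruction is an $l_1$-cocycle supported on tensors with a $W$-entry, so that $h$ produces a valid gauge correction. Convergence poses no difficulty, since $S(L[-1])$ is arity-graded and each $\Phi_n$ depends only on finitely many lower-arity data.
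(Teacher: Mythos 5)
Your proposal is correct and follows essentially the same route as the paper: split $(L, l_1)$ into its homology plus an acyclic complement, then apply the homotopy transfer theorem to install a minimal $\Li$-structure on the homology with the inclusion and projection as $\Li$-quasi-isomorphisms. Your closing inductive construction of the genuine $\Li$-isomorphism $M \oplus C \to L$ (correcting arity by arity, using that the monomials containing a $W$-factor form an acyclic subcomplex) is a sound fleshing-out of the step the paper leaves implicit by deferring to Kontsevich's seven-line argument, so if anything you have supplied more detail than the paper's sketch.
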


\begin{proof}
  The proof of this theorem is also simpler than that of its DGCA
  version. If the fact that in Kontsevich's paper
  \cite{Kontsevich.dqopm}, it takes only seven lines is not
  convincing, let us indicate the idea. Since we are working over a
  field, the underlying complex (i.e., the dg-space) $(L,d)$ of an
  $\Li$-algebra $L$ is isomorphic to the direct sum of its homology
  $H_\bullet(L,d)$ and an acyclic complex. The resulting projection of
  the complex $(L,d)$ onto its homology will then be a chain homotopy
  equivalence. Then the homotopy transfer theorem, also known as the
  homological perturbation lemma, which uses the fact that the
  $L_\infty$-operad is cofibrant in a suitable model category of
  operads, see for example, \cite[Theorem 1.23]{Berglund.RHT}, gives an
  $L_\infty$ structure on $H_\bullet(L,d)$ so that the inclusion to
  $L$ and projection onto $H_\bullet(L,d)$ are
  $\Li$-quasi-isomorphisms.
\end{proof}

This theorem and a rather obvious inverse mapping theorem
\cite[Section 4.1]{Kontsevich.dqopm}, which says that an
$L_\infty$-homomorphism whose linear part is an isomorphism of graded
vector spaces is an $L_\infty$-isomorphism, immediately implies the
following analogue of \Cref{unique}.

\begin{thm}[{\cite[Section 4.5.1]{Kontsevich.dqopm}}, {\cite[Theorem 2.7]{Buijs}}]
  \label{minimal}
  \begin{enumerate}[$(1)$]
  \item
    Every $\Li$-quasi-iso\-mor\-phism between minimal $\Li$-algebras
    is an $\Li$-isomorphism.
\item
  Every DGLA has a minimal $\Li$-model, unique up to isomorphism.
  \end{enumerate}
\end{thm}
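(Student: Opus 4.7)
The plan is to derive both parts of the theorem from \Cref{sum} and the inverse mapping theorem cited just above the statement, in perfect parallel to how \Cref{unique} is deduced from \Cref{product}.

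For part (1), I would start with an $\Li$-quasi-isomorphism $f \co L \to L'$ between minimal $\Li$-algebras. The hypothesis of minimality means $d = l_1 = 0$ on both sides, so the homology of the underlying complex of each is just the underlying graded vector space itself. The quasi-isomorphism condition on $f$, which concerns the homology of $d$, therefore reduces to the statement that the linear part $f_1 \co L \to L'$ is an isomorphism of graded vector spaces. The inverse mapping theorem then upgrades $f$ to an $\Li$-isomorphism. This is essentially a one-line argument.

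For part (2), I would handle existence first. Given a DGLA $L$, view it as an $\Li$-algebra and apply \Cref{sum} to obtain an $\Li$-isomorphism $L \cong M \oplus C$ with $M$ minimal and $C$ contractible. The inclusion $M \hookrightarrow M \oplus C \cong L$ is an $\Li$-morphism whose effect on $d$-homology is an isomorphism (since $H_\bullet(C, d) = 0$), so $M \xrightarrow{\qis} L$ is the desired minimal $\Li$-model. For uniqueness, given two minimal $\Li$-models $\varphi_i \co M_i \xrightarrow{\qis} L$ for $i = 1, 2$, I would fix a decomposition $L \cong M \oplus C$ as above and let $\pi \co L \to M$ be the projection, which is an $\Li$-quasi-isomorphism by the same acyclicity argument. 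Then $\pi \circ \varphi_i \co M_i \to M$ is an $\Li$-quasi-isomorphism between minimal $\Li$-algebras, hence an $\Li$-isomorphism by part (1); composing one with the inverse of the other yields $M_1 \cong M_2$.

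The only subtle point — and the main thing I would want to verify carefully — is that the projection $M \oplus C \to M$ really is an $\Li$-morphism, i.e., comes from a dg-coalgebra map under the Chevalley-Eilenberg equivalence \eqref{CE}. Under that equivalence, the direct sum corresponds to a tensor product of coaugmented dg-coalgebras, $C(M \oplus C) \cong C(M) \otimes C(C)$, and the projection is realized by $\id \otimes \varepsilon \co C(M) \otimes C(C) \to C(M)$, where $\varepsilon$ is the counit of $C(C)$. This is manifestly a homomorphism of dg-coalgebras, so the projection is an $\Li$-morphism, and the argument goes through cleanly. Once this bookkeeping is checked, both parts of the theorem follow immediately with no further calculation.
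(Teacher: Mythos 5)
Your proposal is correct and follows exactly the route the paper intends: the paper gives no separate proof of \Cref{minimal}, asserting only that it follows ``immediately'' from \Cref{sum} together with the inverse mapping theorem, and your argument is precisely that derivation spelled out (quasi-isomorphism between minimal algebras $=$ linear part is an isomorphism, then invoke the inverse mapping theorem; existence and uniqueness via the direct-sum decomposition). Your careful check that the projection $M \oplus C \to M$ is an $\Li$-morphism via $C(M \oplus C) \cong C(M) \otimes C(C)$ is sound, though one could also get it for free from the paper's proof of \Cref{sum}, where the homotopy transfer theorem already supplies the projection onto homology as an $\Li$-quasi-isomorphism.
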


\subsection{The $\Li$ version of the fundamental  theorem of RHT}

Now back to spaces. Let us use the functor \eqref{lambda} constructed
by Quillen. Given a simply connected space $X$, let us call a minimal
$\Li$-model $Q(X) \xrightarrow{\qis} \lambda X$ of the DGLA $\lambda
X$ the \emph{Quillen minimal model} of $X$. It is defined up to
$\Li$-isomorphism by the theorem above. Note that traditionally, the
Quillen model is defined differently: it is a DGLA which is free as a
graded Lie algebra and with a decomposable differential, see for
example, \cite[Section 1.9]{Berglund.Stoll}. In this way, Quillen's
original approach is also parallel to Sullivan's theory, but the
structure Theorems \ref{product} and \ref{sum} suggest that Koszul
duality between DGCAs and $\Li$-algebras may be a better alternative
to the parallel between semifree DGCAs and semifree DGLAs.

The following analogue of \Cref{Q-eq} works under weaker assumptions,
except for simple connectivity, and deals with covariant, rather than
contravariant functors. Consider the composition of functors
\[
\Top_1 \xrightarrow{\lambda} \DGLA_0 \hookrightarrow \LiAlg_0
\xrightarrow{C} \DGCC_1,
\]
where $\DGCC_1$ is the category of \emph{simply connected coaugmented
nonnegatively graded dg-co\-com\-mu\-ta\-tive coalgebras}. Even though
Quillen does not include in his work the category $\LiAlg_0$, we know
it is equivalent to the full subcategory $\DGCC^{\operatorname{cf}}_1$
of cofree objects in the category $\DGCC_1$.

\begin{thm}[{\cite[Theorem I]{Quillen.1969}}]
  \label{Quillen-eq}
  The functor $\lambda$ composed with the functors $\DGLA_0 \to \LiAlg_0
  \linebreak[0] \to \DGCC_1$ induces equivalences of categories
      \[
\Ho_\QQ(\Top_1) \xrightarrow{\sim} \Ho(\DGLA_0) \xrightarrow{\sim}
\Ho(\LiAlg_0) \xrightarrow{\sim} \Ho(\DGCC_1),
      \]
where $\Ho_\QQ(\Top_1)$ is the localization of the category $\Top_1$ by
rational equivalences and, in the other cases, $\Ho$ stands for the
localization of the corresponding category with respect to
quasi-isomorphisms.
\end{thm}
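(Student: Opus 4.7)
The plan is to establish the three equivalences in the chain independently. The rightmost two are more algebraic and comparatively elementary, so I would dispatch them first, saving Quillen's chain of Quillen equivalences for last.

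For $\Ho(\LiAlg_0) \xrightarrow{\sim} \Ho(\DGCC_1)$, I would leverage the equivalence of categories \eqref{CE}, which identifies $\LiAlg_0$ with the full subcategory $\DGCC^{\operatorname{cf}}_1$ of cofree objects, and sends $\Li$-quasi-isomorphisms to DGCC quasi-isomorphisms by definition. To upgrade to all of $\Ho(\DGCC_1)$, it suffices to show that every simply connected coaugmented DGCC is quasi-isomorphic to a cofree one. This is accomplished via the bar-cobar adjunction: for $C \in \DGCC_1$, the composition $C \circ \Omega(C)$, where $\Omega$ is the cobar functor landing in $\DGLA_0$, produces a cofree DGCC together with a natural quasi-isomorphism $C(\Omega C) \xrightarrow{\qis} C$ under the simple connectivity hypothesis that controls convergence.

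For $\Ho(\DGLA_0) \xrightarrow{\sim} \Ho(\LiAlg_0)$, I would observe that the inclusion functor (viewing a DGLA as an $\Li$-algebra with $l_n = 0$ for $n \ge 3$) is essentially surjective because any $\Li$-algebra $L$ is quasi-isomorphic to the strict DGLA $\Omega C(L)$ by the same bar-cobar argument. Full faithfulness follows from \Cref{minimal}: both categories share the same collection of minimal $\Li$-algebras as skeleton, and homotopy classes of $\Li$-morphisms may be computed in either setting. Combined with the structure Theorems \ref{product} and \ref{sum}, this reduces the comparison to a check at the level of minimal models, where it is tautological. Finally, for $\Ho_\QQ(\Top_1) \xrightarrow{\sim} \Ho(\DGLA_0)$, I would factor $\lambda = N \mc{P} \widehat{\QQ} G \Sing$ as a zigzag of Quillen equivalences between intermediate model categories: the functor $\Sing$ gives the standard equivalence $\Top_1 \sim \sSet_1$, Kan's loop group $G$ delivers $\sSet_1 \sim \mathbf{sGrp}_0$ (reduced simplicial groups), the completed rational group algebra $\widehat{\QQ}$ becomes a Quillen equivalence onto reduced simplicial Hopf algebras after localizing at rational equivalences, the primitive functor $\mc{P}$ is the simplicial Milnor-Moore equivalence onto reduced simplicial Lie algebras, and the normalized chains $N$ complete the picture via a Lie-algebra version of the Dold-Kan correspondence. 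Taking induced equivalences of homotopy categories and composing yields the main claim.

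The main obstacle will be the Milnor-Moore step in the last equivalence. Verifying that $\mc{P}$ is a Quillen equivalence requires restricting to complete or filtered simplicial Hopf algebras, since the naive primitive functor is not homotopically well-behaved. The rationality hypothesis is unavoidable here, as Milnor-Moore fails over $\ZZ$ or in positive characteristic; this is precisely where the localization by rational equivalences on the topological side enters essentially. Simple connectivity is needed throughout to guarantee convergence of the various completions and of the bar-cobar quasi-isomorphisms used in the earlier steps. Setting up compatible model structures on the intermediate categories and verifying naturality of the zigzag constitutes the bulk of the technical work in Quillen's original paper.
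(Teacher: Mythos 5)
The paper itself offers no proof to compare against: \Cref{Quillen-eq} is quoted directly from Quillen's 1969 paper, and the survey's only added content is the insertion of $\Ho(\LiAlg_0)$ into the chain, which rests on the equivalence \eqref{CE} and \Cref{minimal}. Your third step is, in outline, exactly Quillen's own argument (reduced simplicial sets via $\Sing$, Kan's loop group $G$, the completed group algebra $\widehat{\QQ}$, primitives $\mc{P}$, normalized chains $N$), and you correctly locate the crux at the Malcev/Milnor--Moore step, where completions and rationality enter essentially. So on the topological equivalence your route coincides with the one the paper implicitly relies on.

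There are, however, two soft spots in your algebraic steps. First, the claim that $C$ ``sends $\Li$-quasi-isomorphisms to DGCC quasi-isomorphisms by definition'' is false as stated: an $\Li$-quasi-isomorphism is defined by its linear part inducing an isomorphism on $H_\bullet(L,d)$, whereas a quasi-isomorphism in $\DGCC_1$ is a quasi-isomorphism of the total complexes $(C(L), D)$. That these two classes of morphisms correspond under $C$ is a genuine lemma, proved by filtering $C(L)$ by word length (the coradical filtration) and running the associated spectral sequence, whose convergence is precisely where the positive grading / simple connectivity is used; without this identification the two localizations need not agree, so this cannot be waved through as definitional. Second, your full-faithfulness argument for $\Ho(\DGLA_0) \to \Ho(\LiAlg_0)$ does not parse as phrased: minimal $\Li$-algebras are not objects of $\DGLA_0$, so the two categories cannot ``share the same skeleton,'' and the appeal to \Cref{product} (a statement about Sullivan DGCAs) is out of place here. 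The correct mechanism is already in your toolkit: the bar-cobar adjunction identifies $\Li$-morphisms $L_1 \to L_2$ with strict DGLA morphisms $\Omega C(L_1) \to L_2$, and $\Omega C(L_1) \xrightarrow{\qis} L_1$ is a strict quasi-isomorphism serving as a cofibrant replacement in a model structure on $\DGLA_0$ in which every object is fibrant; hence hom-sets in both localizations are computed by homotopy classes of strict maps out of $\Omega C(L_1)$. With these two repairs, your outline is sound.
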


This theorem implies that the isomorphism class of the Quillen minimal
model $Q(X)$ is a complete invariant of the rational homotopy type of
a simply connected space $X$: such spaces $X$ and $Y$ are rationally
equivalent if and only if their Quillen minimal models $Q(X)$ and
$Q(Y)$ are $\Li$-isomorphic.

One feature of Quillen's functor $\lambda$ is that the rational
homology of the DGLA $\lambda X$ is naturally isomorphic to the
rational homotopy of a simply connected space $X$, with a shift:
\[
H_\bullet(\lambda X, d) \cong \pi_\bullet (X) \otimes \QQ [1].
\]
Therefore, the shifted rational homotopy groups $\pi_\bullet (X)
\otimes \QQ [1]$ acquire a graded Lie bracket. Quillen shows that this
bracket is the Whitehead product \cite[Theorem I]{Quillen.1969}.

By definition, the Quillen minimal model $Q(X)$ is an $\Li$-algebra
based on the graded $\QQ$-vector space
\[
Q(X) \cong H_\bullet (\lambda X, d) \cong \pi_{\bullet} (X) \otimes
\QQ[1],
\]
cf.\ \eqref{cohomology} and \eqref{homotopy} for the Sullivan minimal
model.  The binary bracket of this $\Li$ structure is the Whitehead
product, while the higher $\Li$ brackets have been shown to be the
higher Whitehead products, see \cite{Belchi.Buijs.Moreno}.

\subsection{Relation to Sullivan's theory}

For a simply connected space $X$ of finite $\QQ$-type, the graded
linear dual $C(Q(X))^*$ of the dg-co\-com\-mu\-ta\-tive coalgebra
$C(Q(X))$ of the Quillen minimal model $Q(X)$ is a DGCA. The fact that
the differential on the $\Li$-algebra $Q(X)$ vanishes implies that the
linear part of the differential, the dual of $l_1$, on $C(Q(X))^*$
vanishes, i.e., the differential is decomposable. Also, under the
finiteness assumption, the graded commutative algebra $C(Q(X))^*$ is
isomorphic to $S(Q(X)^*)$. This implies that $C(Q(X))^*$ is a minimal
Sullivan algebra. Thus, a space $X$ of the above type gets two minimal
Sullivan algebras: $C(Q(X))^*$ and the Sullivan minimal model
$M(X)$. Two natural constructions rarely happen to be different. This
is not the case here, either, as the following interpretation of
Majewski's result shows.

\begin{thm}[Majewski {\cite[Theorem 4.90]{Majewski}}, cf.\ {\cite[Remark 2.9]{Buijs}} and {\cite[Remark 1.40]{Berglund.Stoll}}]
  \label{majewski}
  The two minimal Sullivan algebras associated to a simply connected
  space of finite $\QQ$-type, $C(Q(X))^*$ and the Sullivan minimal
  model $M(X)$ are isomorphic.
\end{thm}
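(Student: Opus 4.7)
The plan is to invoke the uniqueness clause of \Cref{unique}(2): any two minimal Sullivan models of a given homologically connected DGCA are isomorphic. Hence, to prove $C(Q(X))^{*} \cong M(X)$, it suffices to exhibit $C(Q(X))^{*}$ as a minimal Sullivan model of $A_{\PL}(X)$.

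The first step is to verify that $C(Q(X))^{*}$ is itself a minimal Sullivan algebra, essentially as sketched in the paragraph preceding the theorem. Simple connectivity and finite $\QQ$-type make $Q(X) \cong \pi_\bullet(X) \otimes \QQ[1]$ degreewise finite-dimensional and positively graded, so the graded linear dual turns the cofree graded-symmetric coalgebra $C(Q(X)) = S(Q(X)[-1])$ into the free graded-symmetric algebra $S((Q(X)[-1])^{*})$, and the codifferential $D$ of degree $-1$ into a DGCA differential of degree $+1$. The outcome is connected, semifree on generators in degrees $\ge 2$, and hence automatically Sullivan. Minimality is the key algebraic content and uses that $Q(X)$ is a \emph{minimal} $\Li$-algebra: since $l_{1}=d=0$ on $Q(X)$, the component of $D$ projecting to cogenerators vanishes, which by duality means the differential on $C(Q(X))^{*}$ sends each generator into $S^{\ge 2}$.

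The main step is to construct a (zig-zag of) quasi-isomorphism(s) between $C(Q(X))^{*}$ and $A_{\PL}(X)$. My plan is to split this into two pieces. First, by functoriality of the Chevalley-Eilenberg equivalence \eqref{CE}, the $\Li$-quasi-isomorphism $Q(X) \xrightarrow{\qis} \lambda X$ yields a quasi-isomorphism of DGCCs $C(Q(X)) \xrightarrow{\qis} C(\lambda X)$, which under finite $\QQ$-type dualizes to a DGCA quasi-isomorphism $C(\lambda X)^{*} \xrightarrow{\qis} C(Q(X))^{*}$. Second, one must identify $C(\lambda X)^{*}$ with $A_{\PL}(X)$ in the homotopy category. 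This is the genuine bridge between Quillen's and Sullivan's theories: \Cref{Q-eq} and \Cref{Quillen-eq} both supply equivalences with the rational homotopy category, and the two must be intertwined by the duality $C(-)^{*}$.

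The hard part is precisely this last comparison $A_{\PL}(X) \simeq C(\lambda X)^{*}$. Majewski's own proof establishes it by a careful, hands-on simplicial argument that compares $A_{\PL}$ with an explicit simplicial Lie-coalgebra construction; a more abstract alternative is to appeal to the Koszul duality \cite{GK} between the commutative and Lie operads, under which the cobar construction applied to the dg-cocommutative coalgebra $C(\lambda X)$ produces a Sullivan (cofibrant) model of $A_{\PL}(X)$. Once this comparison is secured, composing with the quasi-isomorphism $C(\lambda X)^{*} \xrightarrow{\qis} C(Q(X))^{*}$ from the first piece presents $C(Q(X))^{*}$ as a Sullivan model of $A_{\PL}(X)$; minimality was established in the first step, and \Cref{unique}(2) then forces $C(Q(X))^{*} \cong M(X)$.
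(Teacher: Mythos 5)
Your strategy is sound in outline, but it is genuinely different from the paper's: it is the Koszul-dual argument. The paper never exhibits $C(Q(X))^*$ as a model of $A_{\PL}(X)$ at all; it works entirely on the Lie side. It dualizes $M(X)$ to the dg-cocommutative coalgebra $M(X)^* \cong S(\pi_\bullet(X)\otimes\QQ)$, regards this as a second minimal $\Li$-structure on $\pi_\bullet(X)\otimes\QQ[1]$, and connects the two minimal $\Li$-structures by a zig-zag of quasi-isomorphisms: $M(X)^*$ is $\Li$-quasi-isomorphic to its double bar construction, the DGLA $\LL^*(M(X))$, while $Q(X)$ is $\Li$-quasi-isomorphic to $\lambda X$ and hence to Quillen's semifree minimal DGLA model $L_X$; the bridge is exactly what the cited result asserts, namely that the DGLAs $\LL^*(M(X))$ and $L_X$ are quasi-isomorphic \cite[Theorem 4.90]{Majewski}. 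Uniqueness is then invoked on the $\Li$ side (\Cref{minimal}), and only at the last step is the coalgebra isomorphism $M(X)^* \cong C(Q(X))$ dualized to DGCAs. You instead invoke uniqueness on the Sullivan side (\Cref{unique}), which forces you to produce the bridge in the dual form $A_{\PL}(X) \simeq C(\lambda X)^*$ in $\Ho(\DGCA)$. Both routes defer the same essential content to Majewski; what the paper's arrangement buys is that the deferred statement is literally the cited Theorem 4.90, whereas your formulation needs either a different part of Majewski's monograph or a dualization argument converting the DGLA statement into the DGCA one --- which, once spelled out, is essentially the paper's proof run in reverse. Your route also uses two standard but unstated facts: that a positively graded $\Li$-quasi-isomorphism induces a quasi-isomorphism on Chevalley-Eilenberg chains (functoriality of $C$ gives the map, not its quasi-isomorphism property), and that a minimal Sullivan algebra connected to $A_{\PL}(X)$ by a mere zig-zag of quasi-isomorphisms admits a genuine quasi-isomorphism to it (by cofibrancy and lifting), since \Cref{unique}(2) as stated concerns models.

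One claim should be deleted: operadic Koszul duality \cite{GK} cannot replace Majewski's theorem. It is a purely algebraic statement; it can tell you that the cobar construction of $C(\lambda X)$ recovers a semifree DGLA quasi-isomorphic to $\lambda X$ (note also that the cobar construction of a cocommutative coalgebra is a Lie algebra, not a DGCA, so it cannot ``produce a Sullivan model of $A_{\PL}(X)$'' as written), but it knows nothing about the relation between the two independently, topologically defined functors $A_{\PL}$ and $\lambda$. Comparing those two functors is irreducibly the content of Majewski's comparison theorem. Finally, a small correction to your minimality check: what vanishes when $l_1 = 0$ is the component of $D$ carrying the cogenerators $S^1(Q(X)[-1])$ to the cogenerators, not ``the component of $D$ projecting to cogenerators,'' which is the full collection of brackets $l_n$ and is certainly nonzero.
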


\begin{proof}
Under the assumptions, the graded dual $M(X)^* \cong S(\pi_{\bullet}
(X) \otimes \QQ)$, see \eqref{homotopy}, is a dg-co\-com\-mu\-ta\-tive
coalgebra, just like $C(Q(X)) \cong S(\pi_{\bullet} (X) \otimes
\QQ)$. The corresponding $\Li$ structures on $\pi_{\bullet} (X)
\otimes \QQ[1]$ are $\Li$-isomorphic. Indeed, the first one is
$\Li$-quasi-isomorphic to its double bar construction, which is the
bar construction $\LL^*(M(X))$ of $M(X)$ (the free Lie algebra on
$M(X)^*[1]$ with the differential incorporating the DGCA structure on
$M(X)$). By the very definition of $Q(X)$, the second $\Li$ structure
on $\pi_{\bullet} (X) \otimes \QQ[1]$ is $\Li$-quasi-isomorphic to
$\lambda X$, which is quasi-isomorphic as a DGLA to Quillen's original
minimal model $L_X$ of $X$ (the minimal semifree DGLA model of
$\lambda X$). Majewski \cite[Theorem 4.90]{Majewski} proves that the
DGLAs $\LL^*(M(X))$ and $L_X$ are quasi-isomorphic. Therefore, so are
the two $\Li$ structures on $\pi_{\bullet} (X) \otimes \QQ[1]$.  Since
both are minimal, they must be $\Li$-isomorphic by \Cref{minimal}. By
definition of an $\Li$-isomorphism, this means the corresponding
dg-co\-com\-mu\-ta\-tive coalgebras $M(X)^*$ and $C(Q(X))$ are
isomorphic. Thus, the DGCAs $M(X)$ and $C(Q(X))^*$ are isomorphic.
\end{proof}

\subsection{The nerve of an $\Li$-algebra}

Here we will deal with nonnegatively graded $\Li$-algebras for
simplicity. The more general case can be treated by considering
completed $\Li$-algebras, see \cite{Berglund.Stoll}. As these form a
category important for the RHT of simply connected and nilpotent
spaces, it will be general enough for our purposes. Given a
nonegatively graded $\Li$-algebra $L$, let us define the \emph{nerve},
also known as the \emph{Maurer-Cartan space}, see \cite{Hinich.1997,
  Getzler.2009, Berglund.Stoll},
\begin{equation}
  \label{MCE}
\MC (L) := \left\{ x \in (\Omega^\bullet \otimes_\QQ L)_{-1} \mid d x +
\tfrac{1}{2!} l_2(x,x) + \tfrac{1}{3!} l_3(x,x,x) + \dots = 0 \right\},
\end{equation}
where $\Omega^\bullet$ is the simplicial DGCA of \Cref{ratlmodels}
with the opposite grading $\Omega_n \coeq \Omega^{-n}$, so that the de
Rham differential takes degree $-1$. Under the grading assumptions,
the equation \eqref{MCE}, for every element $ x \in
(\Omega^\bullet([n]) \otimes_\QQ L)_{-1}$, will in fact contain at
most finitely many nonzero terms, because the degree of the de Rham
factors of $x$ will have to be strictly negative, and the ideal of
forms of nonzero degree in the polynomial de Rham algebra of the
standard $n$-simplex is nilpotent.

The nerve gives a functor $\MC: \LiAlg_{-1} \to \sSet$, which,
combined with the geometric realization, gives a functor
\[
\LiAlg_{-1} \xrightarrow{\MC} \sSet \xrightarrow{\abs{-}} \Top,
\]
where $\LiAlg_{-1}$ is the category of nonnegatively graded
$\Li$-algebras.  Restricting to positively graded $\Li$-algebras, we
get a functor
\[
\LiAlg_0 \to \Top_1,
\]
which induces an inverse of the equivalence
\[
\Ho_\QQ(\Top_1) \xrightarrow{\sim} \Ho(\LiAlg_0)
\]
of \Cref{Quillen-eq} at least for simply connected spaces of finite
type and positively graded $\Li$-algebras of finite type
(finite-dimensional in each degree), see \cite[Proposition
  6.1]{Berglund.rhtoms.2015}.

As a consequence, we get an isomorphism of graded vector spaces
\begin{equation}
\label{homotopyMC}
\pi_\bullet (\abs{\MC(L)}) \otimes \QQ[1] \cong H_\bullet(L, d)
\end{equation}
for a positively graded $\Li$-algebra $L$ of finite type, see
\cite[Theorem 1.1]{Berglund.rhtoms.2015}. If $L$ is minimal, it is the
Quillen minimal model of $\abs{\MC(L)}$ and its homology coincides
with the algebra, $H_\bullet(L, d) = L$, and acquires the structure of
an $\Li$-algebra. In this case, \eqref{homotopyMC} is an isomorphism
of $\Li$-algebras. This follows from \Cref{majewski}.

\section{Some physical applications}

\subsection{Hypothesis H}

Sati \cite{Sati.2013} made a remarkable observation that the equations
of motion (EOMs) of supergravity in an 11-dimensional ($d=11$)
spacetime are essentially the equations for the differential in the
Sullivan minimal model of $S^4$, see \Cref{even}. Supergravity is the
low-energy, infrared limit of M-theory, which is supposed to be the
``mother of them all:'' by combining S- and T-dualities, one can
arrive at type I, IIA, and IIB string theories as well as various
flavors of heterotic string theories. Dimensional reductions of $d=11$
supergravity lead to supergravity theories in lower dimensions.

The EOMs of $d = 11$ supergravity are the equations
\begin{gather}\label{EOMs}
  d G_4 = 0, \qquad dG_7 = -\frac{1}{2} G_4 \wedge G_4,\\
  \nonumber
  G_7 = * G_4
\end{gather}
on two fields $G_4$ and $G_7$, which are just differential forms on
the 11-dimensional spacetime $X$:
\[
G_4 \in \Omega^4(X), \qquad G_7 \in \Omega^7(X).
\]
Here $*$ is the Hodge star, derived from a given metric on
$X$. Returning to the following Sullivan minimal model of $S^4$:
$M_\RR (S^4) = \RR[g_4, g_7]$ with $\abs{g_4} = 4$, $\abs{g_7} = 7$,
and $d g_4 = 0$, $d g_7 = -\tfrac{1}{2} g_4^2$, we see that there is a
DGCA homomorphism
\begin{align}
  \label{H-map}
  M_\RR (S^4) & \to \Omega^\bullet (X),\\
\nonumber  g_4 & \mapsto G_4,\\
\nonumber  g_7 & \mapsto G_7.
\end{align}
Moreover, the RHT of the four-sphere encompasses the topological part
\eqref{EOMs} of the EOMs, called the \emph{duality-symmetric
EOMs}. The equation $G_7 = *G_4$ is important, but it is geometric,
rather than topological. In a way, the four-sphere carries in its
rational homotopy type information about the duality-symmetric
dynamics of any $d=11$ supergravity theory. RHT gives even more
content to this observation, a continuous map
\begin{equation}
\label{q-cohomotopy}
X \to S^4_\RR
\end{equation}
to the \emph{rationalization of $S^4$ over the reals}, which may be
defined as the spatial realization \linebreak[0] $X_{\SR}
(M_\RR(S^4))$. ($S^4_\RR$ has the same real homotopy type as $S^4$ via
a real homotopy equivalence $S^4 \to S^4_\RR$.) The above map is
obtained from the adjoint of \eqref{H-map}, see \Cref{rtlztn}. It
intertwines the fields $G_4$ and $G_7$ on spacetime $X$ with the
generators $g_4$ and $g_7$ of the Sullivan minimal model of
$S^4$. Thus, $S^4_\RR$ becomes the universal target space of $d=11$
supergravity in any spacetime. These observations is the essence of
Sati's \emph{Hypothesis H} \cite{Sati.2013}.

Supergravity in 11 dimensions may be reduced to lower dimensions, and
Hypothesis H extends to establish similar ties between the EOMs and
the Sullivan minimal models of certain loop spaces of $S^4$, along
with similar maps.

The $(11-k)$-dimensional reduction of 11-dimensional supergravity is
obtained under the assumption that there is a $k$-torus $(S^1)^k$
acting on spacetime $Y$ of $\dim Y = 11$, so that the new spacetime is
the quotient $Y/(S^1)^k$. Let us look at the case $k =1$, as this case
is easily generalized to higher $k$'s. There is an adjunction
\begin{equation}
\label{adjunction}
\Hom_{/BS^1} (Y / S^1, \mc{L}_c Z) \xrightarrow{\;\; \sim \;\;}
\Hom (Y, Z),
\end{equation}
where $Y$ is a space with a free action of $S^1$, $Z$ is another
topological space, $\mc{L}_c Z \coeq \Map(S^1, Z) \dslash S^1$ is the
\emph{cyclic $($or equivariant$)$ loop space of $Z$}, the left-hand
side is the set of morphisms in the category $/BS^1$ of spaces over
$BS^1$ (equivalent to the category of principal $S^1$-bundles, such as
$Y \to Y/S^1$ and $\mc{L} Z \to \mc{L}_c Z$), and the right-hand side
is the set of continuous maps $Y \to Z$; see
\cite{FSS-L00}\cite[Theorem 2.44]{BSS}. This adjunction produces a map
\begin{equation}
  \label{cyclified}
Y/S^1 \longrightarrow \mc{L}_c S^4_\RR
\end{equation}
from the map \eqref{q-cohomotopy}. If the action of $S^1$ on $Y$ is
not free, one has to replace the naive quotient $Y/S^1$ by the
homotopy quotient $Y \dslash S^1$ in the above.  Roughly speaking,
thinking of $\mc{L}_c S^4_\RR$ as the space of unparameterized (or
equivariant) free loops in $S^4_\RR$, a map like \eqref{q-cohomotopy}
from an $S^1$-space $Y$ will produce a map \eqref{cyclified} which
assigns to a point $y \in Y/S^1$ the map that takes the $S^1$-orbit in
$Y$ over $y$ to $S^4_\RR$ by the given map $Y \to S^4\RR$.

10-Dimensional supergravity (corresponding to type IIA string theory),
is based on the spacetime $Y/S^1$ and, apart from the above map
\eqref{cyclified}, it is related to the rational homotopy type of
$\mc{L}_c S^4_\RR$ via the (duality-symmetric) EOMs, which explicitly
match the equations for the differential of the Sullivan minimal model
$M(\mc{L}_c S^4)$:
\begin{gather*}
  M(\mc{L}_c S^4) = (\RR[g_4, g_7, sg_4, sg_7, w], d)\;,\\
  dg_4 = sg_4 \cdot w, \qquad dg_7 = -\tfrac{1}{2} g_4^2 + sg_7 \cdot w\;,\\
  d sg_4 = 0, \quad dsg_7 = sg_4 \cdot g_4, \quad d w = 0\;,
\end{gather*}
where $\deg w =2$ and, for each generator $x$ of $M(S^4)$, $sx$ is a
new generator of $\deg s x = \deg x -1$. The Sullivan minimal model $
M(\mc{L}_c Z)$ of the cyclic loop space of a path-connected, nilpotent
space $Z$ is obtained via a simple algebraic procedure developed in
the good old times of RHT, a theorem of Vigu\'e-Poirrier and Burghelea
\cite{vigue-burghelea}.  This matching of equations of $d=10$, type
IIA supergravity with the Sullivan minimal model of $M(\mc{L}_c S^4)$
was observed and developed by D.~Fiorenza, U.~Schrei\-ber, and Sati
\cite{FSS15,FSS17,FSS-pbranes} and provided more evidence for
Hypothesis H: it is compatible with reduction to 10 dimensions.

The papers \cite{SV1, SV2} extended Hypothesis H to further
dimensional reductions of supergravity. Name\-ly, the same pattern
persists for the values of $k$ through $k = 10$, which is a matter of
explicit computation of $M(\mc{L}_c^k S^4)$ by iterating
Vigu\'e-Poirrier and Burghelea's theorem and comparing it to the known
EOMs of dimensionally reduced supergravity theories. We may summarize
this application of RHT to Mathematical Physics as follows:
\begin{quote}
 {\it The iterated cyclic loop space $\mc{L}_c^k S^4_\RR$ admits a map
   from the $(11-k)$-dimensional supergravity spacetime, thus making
   $\mc{L}_c^k S^4_\RR$ a universal target for all possible spacetimes
   $Y/(S^1)^k$. The Sullivan minimal model $M(\mc{L}_c^k S^4)$
   determines the duality-symmetric equations of motion of
   $(11-k)$-di\-men\-sion\-al supergravity reduced from 11-dimensional
   supergravity.}
\end{quote}

This matching also implies the following general principle:
\begin{quote}
{\it Any feature of or statement about the Sullivan minimal model of
  an iterated cyclic loop space $\mc{L}_c^k S^4$ (or the rational
  homotopy type thereof) may be translated into a feature of or
  statement about the compactification of M-theory on the $k$-torus.}
\end{quote}

\subsection{Mysterious Triality and cyclic loop spaces of $S^4$}

Mysterious Duality has been tantalizing the world of mathematical
physics since 2001, when A.~Iqbal, A.~Neitzke, and C.~Vafa
\cite{iqb-n-v} discovered a convincing, yet mysterious correspondence
between certain symmetry patterns in the geometry of del Pezzo
surfaces, see \cite{manin:cubic}, and in toroidal compactifications of
M-theory, both governed by the root system series $E_k$.

In the context of algebraic geometry, the $E_k$ root system appears
out of intersection theory on del Pezzo surfaces by a simple algebraic
argument: given a \emph{Lorentzian}, \emph{i.e}., of index $(1,k)$,
lattice of rank $k+1$ with $k \le 8$, let $-K$ be the vector $ -K =
(3, -1, -1, \dots, -1)$ with respect to a Lorentzian basis; then the
set of lattice vectors of inner square $-2$ and orthogonal to $-K$
form a root system that may be shown to be the root system of type
$E_k$. What is remarkable in the geometry of del Pezzo surfaces is
that all this data is natural: the lattice is the second cohomology
group $H^2 (\BB_k; \ZZ)$ of the del Pezzo surface $\BB_k$, the
Lorentzian inner product is the intersection form, and $-K$ is the
anticanonical class, the first Chern class of the holomorphic tangent
bundle of $\BB_k$.


In \cite{SV1}, it was observed that the same data (a Lorentzian
lattice and distinguished vector) as for del Pezzo surfaces above
naturally appears in the rational homotopy theory models for the
iterated cyclic loop spaces of $4$-spheres $S^4$:
\[
S^4, \mc{L}_c S^4, \mc{L}_c^2 S^4, \dots, \mc{L}_c^8 S^4.
\]
We assume that if $\mc{L}_c Z$ on this list happens not to be
connected, we keep only the connected component of the constant loop
$S^1 \to Z$ and slightly abuse notation by using the same symbol
$\mc{L}_c Z$ for the connected component.

Here is how the $E_k$ root system shows up: it is naturally associated
to the infinitesimal symmetries of the Sullivan minimal model
$M(\mc{L}_c^k S^4)$.  The paper \cite{SV1} shows that that a maximal
$\QQ$-split torus $T$ in the automorphism group $\Aut M(\mc{L}_c^k
S^4)$ of $M(\mc{L}_c^k S^4)$ is isomorphic to $(\QQ^\times)^{k+1}$,
where $\QQ^\times := \QQ \setminus\{0\} = \GG_m(\QQ)$ is the
\emph{multiplicative group} of the rationals, and so the Lie algebra
$\h = \Lie (T)$ of this maximal split torus $T$ is a $(k +
1)$-dimensional rational vector space. The Lie algebra $\h$ is
canonically equipped with a Lorentzian metric and a distinguished
Lorentzian basis. Indeed, this data is equivalent to the data of two
distinguished bases, one for $\h$ and one for the dual vector space
$\h^*$. For the dual space $\h^*$, a canonical basis comes from the
weight lattice for the $\h$-action on $M (\mc{L}_c^k S^4)$.
A distinguished basis of $\h$ comes directly from the rational
homotopy theory of $\mc{L}_c^k S^4$. In the rational homotopy
category, both the degree-$n$ folding maps of the source $S^1$’s and
of the target $S^4$ for $\mc{L}_c^k S^4$ are invertible as long as $n
\ne 0$. Taking into account their inverses, one has a natural action
(in the rational homotopy category) of $k + 1$ copies of $\QQ^\times$
on $\mc{L}_c^k S^4$, one for each of the source circles $S^1$ and one
for the target $S^4$.
And since the action of the folding maps on the loop space is by
precomposition in the sources and postcomposition in the target, one
gets from this that the eigenvalues for the infinitesimal generators of
the folding actions are $-1$ for the infinitesimal source foldings and
$+1$ for the target folding, thus giving a natural Minkowski inner product.
Finally, the distinguished vector $-K$ is also quite natural:
it is a unique element $-K \in \h$ which acts as the degree operator
on the Quillen minimal model $\QQ(\mc{L}_c^k S^4)$ of $\mc{L}_c^k
S^4$. Here, the Quillen minimal model is understood in the sense of
\Cref{Quillen-models} as a minimal $\Li$ structure on $\pi_\bullet
(\mc{L}_c^k S^4) \otimes \QQ[1]$. Because the differential of the
Sullivan minimal model $M(\mc{L}_c^k S^4)$ happens to be purely
quadratic, the Quillen minimal model $Q(\mc{L}_c^k S^4)$ is just a
graded Lie algebra, being defined by the Sullivan minimal model
$M(\mc{L}_c^k S^4)$ via the Sullivan/Quillen-model duality
\Cref{majewski}.

The above sketchy argument may be summarized in the following theorem.

 \begin{thm}
 \begin{enumerate}[{\bf (a)}]
 \setlength\itemsep{-2pt}
 \item   
The maximal $\QQ$-split torus of the rational algebraic group $\Aut M
(\mc{L}_c^k S^4)$ for $k \ge 0$ is a $(k+1)$-dimensional torus
$T^{k+1}$ canonically isomorphic to $\GG_m^{k+1}$ over $\QQ$.

\item 
The action of the maximal split torus $\GG_m^{k+1}$ on $M(\mc{L}^k_c
S^4)$ may be lifted to an action on the space $\mc{L}_c^k S^4$ in the
rational homotopy category. In this way, the last factor $\GG_m$ of
$\GG_m^{k+1}$ acts via self-maps of the target $S^4$ and the first $k$
factors act via self-maps of the source $S^1$'s.

\item
\label{bases}
The $(k+1)$-dimensional rational abelian Lie algebra $ \h_k = \Lie
(T^{k+1}) \subseteq \linebreak[1] \Der \linebreak[0] M(\mc{L}_c^k
S^4), $ which plays the role of a Cartan subalgebra, of the maximal
$\QQ$-split torus $T = T^{k+1}$ of the rational algebraic group $\Aut
M(\mc{L}_c^k S^4)$ has an explicit canonical basis. So does the linear
dual $\h_k^*$, which plays the role of a weight space.

\item
\label{element}
There is a unique element $-K$ of the Lie algebra $\h_k$, which acts
on the Quillen minimal model $ Q(\mc{L}_c^k S^4)$ as the degree
operator.

\item For each $k$, $0 \le k \le 8$, the basis of $\h_k$ from {\rm
  \ref{bases}} and element $-K$ from {\rm \ref{element}} above form a
  Lorentzian lattice of rank $k+1$ with a distinguished element,
  giving rise to the exceptional root system $E_k$. This data,
  extracted from the cyclic loop space $M(\mc{L}^{k}_cS^4)$,
  replicates the root data determined by the del Pezzo surface
  $\BB_k$.  The construction of the root data
 extends
 to $k\geq 9$ to the root systems of Kac-Moody algebras of type $E_k$.

\item \textbf{$27$ Lines via rational homotopy of $6$-fold cyclic loop
  space}: In the weight-space decomposition
\[
\pi_\bullet (\mc{L}_c^6 S^4)  \otimes \QQ = \bigoplus_{\alpha \in \h_6^*}
\left(\pi_\bullet (\mc{L}_c^6 S^4) \otimes \QQ \right)_\alpha
\]

\vspace{-3mm} 
\noindent
corresponding to the $7$-torus action on the Quillen minimal model
$Q(\mc{L}_c^6 S^4) \linebreak[0] = \pi_\bullet(\mc{L}_c^6 S^4) \otimes
\QQ[1]$, the $27$ exceptional vectors $\alpha_i \in \h_6^*$, $i = 1,
\dots, 27$,
single out precisely the second rational homotopy group $\pi_2
(\mc{L}_c^6 S^4)  \otimes \QQ$:
\[
\pi_2 (\mc{L}_c^6 S^4) \otimes \QQ = \bigoplus_{i=1}^{27} \left(
\pi_\bullet (\mc{L}_c^6 S^4) \otimes \QQ \right)_{\alpha_i} \; .
\]
Moreover,
\[
\dim \left(\pi_\bullet (\mc{L}_c^6 S^4) \otimes \QQ \right)_{\alpha_i}
= 1
\]
for each $i = 1, \dots, 27$, which means there are $27$ canonically
defined, linearly independent lines in the $\QQ$-vector space
$\pi_2(\mc{L}_c^6 S^4) \otimes \QQ$ and $\dim \pi_2 (\mc{L}_c^6 S^4)
\otimes \QQ = 27$.
\end{enumerate} 

\end{thm}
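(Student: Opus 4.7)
The plan is to proceed by first computing $M(\mc{L}_c^k S^4)$ explicitly and then extracting the linear-algebraic data at its infinitesimal level. Starting from $M(S^4) = \QQ[g_4, g_7]$ with $d g_7 = -\tfrac{1}{2} g_4^2$, I would iterate the Vigu\'e-Poirrier--Burghelea procedure for the cyclic loop space (applied $k$ times, choosing the connected component of the constant loop at each step). Each iteration introduces, for every pre-existing generator $x$, a shifted generator $sx$ of degree $\abs{x}-1$, together with a fresh degree-$2$ generator $w_i$, and the new differential is obtained by replacing $d$ with $d + s \cdot w_i$ in a formal Lie-derivative fashion. This gives a completely explicit $M(\mc{L}_c^k S^4)$ as a polynomial DGCA on $2(k+2)$ generators whose differential is \emph{quadratic} in the generators, a point which is essential for what follows.

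Next, for part (a) I would determine the maximal $\QQ$-split torus $T \subseteq \Aut M(\mc{L}_c^k S^4)$ by looking at diagonal actions on the generators. Any such action assigns a weight (a character of a torus) to each generator, and the condition that the quadratic differential be equivariant forces a system of \emph{linear} relations among these weights. A direct count shows that the solution space has dimension $k+1$, giving $T \cong \GG_m^{k+1}$. Parts (b) and (c) then follow: one canonical basis of $\h_k = \Lie(T)$ comes from the $\GG_m$-actions by degree-$n$ folding on the target $S^4$ (generating one factor) and on each of the $k$ source circles (generating the other $k$ factors), where rationality of self-maps of $S^4$ of nonzero degree follows from rational equivalence $S^4 \sim_\QQ S^4_\QQ$ and the fact that the generators $g_4, g_7$ of $M_\RR(S^4)$ may be rescaled independently up to isomorphism. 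For part (d), the degree operator on $Q(\mc{L}_c^k S^4)$ is a well-defined derivation lying in $\h_k$, and its uniqueness is immediate from $\h_k$ being abelian and acting faithfully via distinct weights on the generators of the Sullivan minimal model.

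For part (e), the key observation is that the canonical basis from (c) carries a natural symmetric bilinear form on $\h_k$: one target-direction basis vector squares to $+1$ while the $k$ source-direction basis vectors square to $-1$ (with no cross terms), giving a Lorentzian lattice of signature $(1, k)$. The vector $-K$ from (d), when expressed in this basis, turns out to be $(3, -1, \dots, -1)$ because the degree of $g_4$ (and $g_7$, up to consistency with the differential) together with the degree shifts imposed at each loop iteration produces precisely these coefficients. By the classical algebraic construction recalled just before the theorem, the set $\{\alpha \in \h_k^* \mid \langle \alpha, \alpha\rangle = -2, \ \langle \alpha, -K\rangle = 0\}$ is then identified as the $E_k$ root system for $0 \le k \le 8$, and Kac-Moody $E_k$ for $k \ge 9$. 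This exactly replicates the numerical data of $\BB_k$ with its intersection form and anticanonical class.

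The computation for part (f) is then a matter of locating the $27$ exceptional roots $\alpha_i$ of $E_6$ inside the weight decomposition of $Q(\mc{L}_c^6 S^4) = \pi_\bullet(\mc{L}_c^6 S^4) \otimes \QQ[1]$. Using the explicit generators of $M(\mc{L}_c^6 S^4)$, one reads off the weight of each generator and identifies those generators whose weights lie among the $\alpha_i$; a direct inspection shows that these are precisely the generators of degree $3$ in $M(\mc{L}_c^6 S^4)$ (corresponding, via \eqref{homotopy} and \Cref{majewski}, to $\pi_2(\mc{L}_c^6 S^4) \otimes \QQ$), and that each appears with multiplicity one. The \emph{main obstacle} in the plan is the bookkeeping required for this last step: producing all $27$ weights and verifying both their distinctness and their exhaustiveness as the set of exceptional roots of $E_6$ requires a careful, simultaneous handling of the iterated Vigu\'e-Poirrier--Burghelea recursion and the $E_6$ combinatorics, which is carried out in detail in \cite{SV1, SV2}.
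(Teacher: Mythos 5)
Your overall plan coincides with the paper's own argument: the theorem is presented there as the summary of a sketch that proceeds exactly along your lines --- iterate the Vigu\'e-Poirrier--Burghelea theorem to get an explicit model whose differential is purely quadratic, locate the maximal $\QQ$-split torus by a weight analysis on generators, take the folding actions on the $k$ source circles and the target $S^4$ as the distinguished basis of $\h_k$, realize $-K$ as the degree operator on the Quillen minimal model (a graded Lie algebra precisely because the Sullivan differential is quadratic), and defer the $E_k$ identification and the $27$-lines bookkeeping to \cite{SV1, SV2}, just as the survey does. So there is no divergence in strategy; the problems lie in the two concrete computations that your plan actually commits to.

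First, the generator count is wrong. Each Vigu\'e-Poirrier--Burghelea step replaces a model on $n$ generators by one on $2n+1$ generators ($sx$ for every old generator $x$, plus one new $w$), so $M(\mc{L}_c^k S^4)$ has on the order of $3 \cdot 2^k - 1$ generators (before adjustments for connected components), growing exponentially --- not $2(k+2)$. Already at $k=1$ the paper's displayed model $(\RR[g_4, g_7, sg_4, sg_7, w], d)$ has five generators, not six. This is not a cosmetic slip: for $k=6$ your count allows only $16$ generators in total, whereas part (f) asserts $\dim \pi_2(\mc{L}_c^6 S^4) \otimes \QQ = 27$, which by \eqref{homotopy} requires at least $27$ generators in degree $2$ alone. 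The algebra you propose to compute with is too small to contain the structure the theorem describes, so both the ``direct count'' yielding $\dim T = k+1$ in (a) and the weight bookkeeping in (f) would be carried out on the wrong object. Second, there is an off-by-one error: \eqref{homotopy} is degree-preserving, so $\pi_2(\mc{L}_c^6 S^4) \otimes \QQ$ is seen by the degree-$2$ generators of $M(\mc{L}_c^6 S^4)$, not the degree-$3$ ones as you claim (degree-$3$ generators correspond to $\pi_3$). Since everything downstream in your argument --- the linear relations among weights and the identification of the $27$ exceptional weights --- is to be read off from this explicit model, neither slip gets repaired later.
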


 \subsection{Other applications to physics}
 
Sati and M.~Wheeler \cite{SW18} studied rationalizations of string,
fivebrain, and ninebrain structures on the tangent space of a
manifold. These are liftings of the standard structure group $O(n)$ of
the tangent bundle of an $n$-manifold to the groups
$\operatorname{String}(n)$, $\operatorname{Fivebrain}(n)$, and
$\operatorname{Ninebrain}(n)$, respectively, which are, in a certain
way, higher generalizations of the spin group
$\operatorname{Spin}(n)$. Rationalization allowed for considerable
simplification of these structures, and RHT was applied to describe
the space of these rationalized structures. Sati and Wheeler
\cite{SW19} used these results to describe the topological parts of
action functionals in string and M-theories.

Surveys of some other applications of RHT to Mathematical Physics may
be found in \cite{FSS.trhsomt} and \cite{sati2024flux}.

\bibliographystyle{amsalpha}
\bibliography{RHT}


\end{document}